\newcommand{\fa}{\forall \,}
\newcommand{\ex}{\exists \,}
\renewcommand{\leq}{\,\leqslant\,}
\renewcommand{\geq}{\,\geqslant\,}
\newcommand{\lra}{\longrightarrow}
\newcommand{\lms}{\longmapsto}
\renewcommand{\d}{\mathrm{d}}
\newcommand{\R}{\mathrm{R}}
\newcommand{\NN}{\mathbb{N}}
\newcommand{\RR}{\mathbb{R}}
\newcommand{\ZZ}{\mathbb{Z}}
\newcommand{\AAA}{\mathcal{A}}
\newcommand{\BBB}{\mathcal{B}}
\newcommand{\CCC}{\mathcal{C}}
\newcommand{\DDD}{\mathcal{D}}
\newcommand{\EEE}{\mathcal{E}}
\newcommand{\MMM}{\mathcal{M}}
\theoremstyle{theorem}
\newtheorem{thm}{Theorem}[section]
\newtheorem{lem}[thm]{Lemma}
\newtheorem{claim}{Claim}
\newtheorem{prop}[thm]{Proposition}
\newtheorem{cor}[thm]{Corollary}
\newtheorem*{thm*}{Theorem}
\newtheorem*{lem*}{Lemma}
\theoremstyle{definition}
\newtheorem{exmpl}[thm]{Example}
\theoremstyle{remark}
\newtheorem{rmq}[thm]{Remark}
\renewcommand{\thefootnote}{\fnsymbol{footnote}}
\title{\textbf{On the almost everywhere convergence of two-parameter ergodic averages along directional rectangles}}
\author{Bastien \textsc{Lecluse}}
\begin{document}

	\maketitle

\renewcommand{\thefootnote}{}

\footnote{2020 \emph{Mathematics Subject Classification}: Primary 37A30; Secondary 37A46, 42B25.}

\footnote{\emph{Key words and phrases}: Two-parameter ergodic averages, maximal operators, transfer principle, almost everywhere convergence of ergodic averages.}

\renewcommand{\thefootnote}{\arabic{footnote}}
\setcounter{footnote}{0}

\begin{abstract}
	In this paper, we study the almost everywhere convergence of sequences of two-parameter ergodic averages over rectangles in the plane. On the one hand, we show that if the rectangles we consider have their sides with slopes in a finitely lacunary set, then the averages converge almost everywhere in all $L^p$ spaces, $1 < p < \infty$. On the other hand, given some non-lacunary sets of directions, we construct sequences of rectangles oriented along these directions for which the associated ergodic averages fail to converge almost everywhere in any $L^p$ space, $1 < p < \infty$.
\end{abstract}

	\tableofcontents

\section{Introduction}

Let $T$ be a measure preserving transformation on a probability space $ (X,\AAA,\mu) $. One of the fundamentals of ergodic theory, proved in 1931 by Birkhoff, is the \textit{pointwise ergodic theorem}, also known as \textit{Birkhoff's ergodic theorem}: for any $f \in L^1(X)$,
\[ \lim_{n \to \infty} \frac{1}{n} \sum_{i=0}^{n-1} f(T^ix)  \]
exists for almost every $x \in X$. Since then, many generalizations have been obtained, see for example the results of Bellow, Jones and Rosenblatt in \cite{CVmoving.av.BJR} for ``moving averages''. In this paper, we are interested in the behavior of two-parameter ergodic averages. Let $ S,T $ be a pair of ergodic, commuting, invertible, measure preserving,  transformations on a probability space $ (X,\AAA,\mu) $. Moreover, we assume that $S$ and $T$ are ``jointly aperiodic'' in the sense that for any $(i,j) \in \ZZ^2$, $(i,j) \neq (0,0)$, one has
\[ \mu (\{x \in X : S^iT^jx = x\}) = 0. \]
If $R \subset \RR^2$ is a rectangle (not necessarily parallel to the coordinate axes), let $l(R)$ be the length of its shortest side. Given a sequence $\{R_n\}_{n \in \NN}$ of rectangles of $\RR^2$ with $l(R_n) \to \infty$ as $n \to \infty$, we study the almost everywhere convergence of the averages
\[ M_nf := \frac{1}{\# (R_n \cap \ZZ^2)} \sum_{(i,j) \in R_n} f(S^iT^j) \]
for all $f : X \to \RR$ belonging to a given function space. We will always assume that the sets $R_n$ contain the point $(0,0)$. Such a sequence of rectangles will be called an \textit{averaging process according to $S$ and $T$}, or just a \textit{process} if there is no ambiguity. Some results about the behaviour of two-parameter ergodic averages are already known, especially when averaging on rectangles with sides parallel to the axes. Let us recall some known results. If for all $n \in \NN^*$, $R_n$ is the \textit{square} $\{ 0, \ldots , n-1 \}^2$, then for all $f \in L^1(X)$ the averages
\begin{align}\label{EQ - Lebesgue diff thm}
	\lim_{n \to \infty} \frac{1}{n^2} \sum_{i=0}^{n-1} \sum_{j=0}^{n-1} f(S^iT^jx)
\end{align}
converge for almost every $x \in X$. In 1951, Dunford in \cite{MR42074} and Zygmund in \cite{MR45948} proved independently that averages over rectangles with sides parallel to the coordinate axes converge almost everywhere if $f \in L\log L(X)$: namely under that assumption,
\begin{align}\label{EQ - strong maximal theorem}
	\lim_{m,n \to \infty} \frac{1}{mn} \sum_{i=0}^{m-1} \sum_{j=0}^{n-1} f(S^iT^jx)
\end{align}
exists for almost every $x \in X$. Furthermore, Hagelstein and Stokolos proved in \cite{HagelsteinStokolos2010} that this result is sharp in the sense that given any positive increasing function $\varphi$ on $[0,\infty)$ such that $\varphi(x) = o(\log (x))$ as $x \to \infty$, there exists a function $g \in L\varphi (L)$\footnote{A function $g$ is in $L \varphi(L)(X)$ if the function $t \mapsto |g(t)|\log(|g(t)|)$ is in $L^1(X)$.} for which the limit (\ref{EQ - strong maximal theorem}) fails to converge almost everywhere. They later improved this result in 2011, see \cite{MR2781915}. They generalized it to arbitrary sets of unbounded rectangles with sides parallel to the coordinates axes, more precisely they studied the almost everywhere convergence of the averages
\begin{align}\label{EQ - généralisation strong maximal theorem}
	\lim_{\substack{m,n \to \infty \\ (m,n) \in \Gamma}} \frac{1}{mn} \sum_{i=0}^{m-1} \sum_{j=0}^{n-1} f(S^iT^jx)
\end{align}
where $\Gamma \subset \ZZ_+^2$ is unbounded. ``Moving averages'' over rectangles with sides parallel to the coordinates axes have also been studied by Moonens and Rosenblatt, see \cite{Monv.av.plane.MR}.\\

There appears to be a strong analogy between ergodic averages and the differentiation of integrals. Indeed, the limit (\ref{EQ - Lebesgue diff thm}) can be viewed as an ergodic analogue of the Lebesgue differentiation theorem, while the limit in (\ref{EQ - strong maximal theorem}) evokes the strong differentiation of integrals due to Jessen, Marcinkiewicz, and Zygmund (see \cite{Jessen1935}). In the theory of differentiation of integrals, the case of averaging over rectangles that make angles with the horizontal axis in the plane has already been explored. \\
\indent However, as far as we know, the case where the rectangle $\{R_n\}_{n \in \NN}$ make some angle with the horizontal axis has not been studied yet in the ergodic context. The aim of this paper is to study the influence of the directions in which the rectangles are oriented on convergence results in the ergodic setup.\\

Let us introduce some terminology. If $1 \leq p \leq \infty$, we say that an averaging process $\{R_n\}_{n \in \NN}$ is a \textit{$p$-good averaging process according to $S$ and $T$}, or simplier a \textit{$p$-good process}, if the averages $M_nf$ converge pointwise almost everywhere when $n \to \infty$, for every $f \in L^p (X)$. If there exists a function $f \in L^p(X)$ such that the averages $M_nf$ fail to converge almost everywhere, then we say that $\{R_n\}_{n \in \NN}$ is a \textit{$p$-bad averaging process according to $S$ and $T$}, or just a \textit{$p$-bad process}. In fact, as we shall see, being a $p$-good or $p$-bad process according to $S$ and $T$ doesn't depend on $S$ and $T$, hence we can just use the terminology $p$-good process and $p$-bad process.\\
\indent Recall that convergence almost everywhere of the averages $M_n$ in $L^p(X)$ is deeply related to weak boundedness properties of the corresponding maximal operator $M^*$, defined for a given function $f \in L^p(X), 1\leq p \leq \infty$, by
\[ M^*f := \sup_{n \in \NN} M_n|f|. \]
We say that $M^*$ is of weak-type $(p,p)$, $1 \leq p < \infty$, if there exists a constant $C > 0$ such that for any $f \in L^p(X)$ and any $\lambda > 0$ one has
\begin{align}
	\mu  (\{ x \in X : M^*f(x) > \lambda \}) \leq C  \left ( \frac{\|f\|_{L^p(X)}}{\lambda} \right )^p.
\end{align}
Furthermore, the operator $M^*$ is said to satisfy a \textit{Tauberian condition} if for any $0 < \lambda < 1$, there exists a constant $C_\lambda > 0$ such that for any measurable set $A \subset \AAA$ satisfying $0 < \mu(A) < \infty$, one has
\begin{align}\label{EQ - weak-type (infty,infty)}
	\mu (\{ x \in X : M^*\chi_A(x) > \lambda \}) \leq C_\lambda \mu(A).
\end{align}
Note that satisfying a Tauberian condition is implied by being of weak-type $(p,p)$, for some $1 \leq p < \infty$. Thus, in this context, satisfying a Tauberian condition can be seen as the ``weakest'' weak boundedness property for a maximal operator.
The link between the convergence almost everywhere of $M_nf$ and the boundedness properties of $M^*$ is known as the \textit{Sawyer-Stein principle}. Let us cite a version that will be useful to us. Its proof is postponed to Appendix \ref{APPENDIX - Sawyer Stein}.

\begin{thm}\label{THM - Sawyer Stein principle}
Let $1 \leq p < \infty$. The following are equivalent.
\begin{enumerate}[label=(\roman*)]
	\item Given any $f \in L^p(X)$, the limit
	\[ \lim_{n \to \infty} M_n f(x) = \lim_{n \to \infty} \frac{1}{\# (R_n \cap \ZZ^2)} \sum_{(i,j) \in R_n} f(S^iT^jx) \]
	exists for almost every $x \in X$.
	\item The maximal operator $M^*$ is of weak-type $(p,p)$.
\end{enumerate}
\end{thm}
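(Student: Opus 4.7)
My plan is to prove the two implications separately; the substantive direction is (ii) $\Rightarrow$ (i). The key device is the \emph{oscillation functional}
\[ \Omega f(x) := \limsup_{m,n \to \infty} \bigl| M_n f(x) - M_m f(x) \bigr|, \]
which is sublinear in $f$ and pointwise bounded by $2 M^* f$, hence of weak-type $(p,p)$ under the hypothesis of (ii). Almost everywhere convergence of $M_n f$ is equivalent to $\Omega f = 0$ almost everywhere, so it suffices to exhibit a dense subspace $D \subset L^p(X)$ on which $\Omega$ vanishes: for $f \in L^p$ and $f_0 \in D$ with $\|f - f_0\|_p < \epsilon$, sublinearity gives $\mu\{\Omega f > \lambda\} \leq C(\epsilon/\lambda)^p$, and letting $\epsilon \to 0$ yields the claim.

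I would take $D$ to be the linear span of the joint $\langle S, T \rangle$-invariants and of the coboundaries $g - g \circ S$, $g - g \circ T$ with $g \in L^\infty$. On joint-invariants, $M_n h = h$, so $\Omega h = 0$ trivially. For a coboundary $f = g - g \circ S$, I rewrite $M_n f$ as an average of $g \circ S^i T^j$ over the symmetric difference $(R_n + (1,0)) \triangle R_n$; the latter contains $O(\text{longest side of } R_n)$ lattice points, whereas $R_n$ contains of order $l(R_n) \cdot (\text{longest side of } R_n)$. Hence
\[ \|M_n(g - g \circ S)\|_\infty \,\leq\, \frac{C \|g\|_\infty}{l(R_n)} \,\xrightarrow[n \to \infty]{}\, 0, \]
and similarly for $T$-coboundaries. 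Density of $D$ in $L^p(X)$ then follows from the mean ergodic theorem applied to the $\ZZ^2$-action of $\langle S, T \rangle$, combined with approximation of $L^p$-coboundaries by bounded ones.

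For (i) $\Rightarrow$ (ii), this is the standard Stein maximal principle direction. Almost everywhere convergence of $M_n |f|$ for every $f \in L^p$ forces $M^* f < \infty$ almost everywhere, and since the $M_n$ are positive $L^p$-contractions commuting with the measure-preserving action of $\langle S, T \rangle$, the classical random-signs argument promotes this almost-sure finiteness to the weak-type $(p,p)$ bound on $M^*$.

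The main obstacle is the geometric estimate $\#\bigl[\bigl((R_n + (1,0)) \triangle R_n\bigr) \cap \ZZ^2\bigr] = O(\text{longest side of } R_n)$ for rectangles of arbitrary orientation: one must verify that the symmetric difference, consisting of two thin slabs near the sides of $R_n$ whose exterior normals have a non-zero horizontal component, contains the claimed number of lattice points independently of the tilt angle. Everything else in the argument is formal once this is in hand.
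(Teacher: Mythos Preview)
Your proposal is correct and follows essentially the same route as the paper: the Banach/oscillation principle together with density of coboundaries for (ii)$\Rightarrow$(i), and Garsia's continuity principle (the positive-operator form of Stein's random-signs argument, using the ergodic family $\{T^k\}$) for (i)$\Rightarrow$(ii). The paper simplifies slightly by taking only $\{c + g - g\circ T : c\in\RR,\ g\in L^\infty\}$ as the dense class (legitimate since $T$ alone is assumed ergodic), and it dispatches your ``main obstacle'' via its volume/cardinality comparison Lemma~3.2 together with the elementary area bound $|R_n \,\triangle\, (R_n+(0,1))| \lesssim l(R_n)+L(R_n)$.
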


This theorem can be reformulated as follows: a process $\{R_n\}_{n \in \NN}$ is a $p$-good process if and only if the associated maximal operator $M^*$ is of weak-type $(p,p)$. The same principle holds in various contexts of operator convergence. For instance, a Sawyer-Stein principle is also usable in the theory of differentiation of integrals. We will establish several ``transfer'' principles, allowing us to deduce ergodic results from results in the differentiation of integrals (see Appendix \ref{APPENDIX - transfer lemma}).\\

In \cite{MR4603294}, D'Aniello, Gauvan, Moonens and Rosenblatt constructed a differentiation processes along certain directional rectangles that do not converge almost everywhere, even for functions in $L^\infty(\RR^2)$. Let us briefly recall the context. If $\{Q_n\}_{n \in \NN}$ is a sequence of tilted rectangles in the plane centered at the origin such that $\mathrm{diam}(Q_n) \to 0$ as $n \to \infty$, we denote for $x \in \RR^2$
\[ T_nf(x) := \frac{1}{|Q_n|} \int_{Q_n+x} f, \] 
the average of a function $f \in L^1_{\mathrm{loc}}(\RR^2)$ over the rectangle $Q_n+x$. As mentioned above, convergence almost everywhere of such operators is linked to the boundedness property of the maximal operator $T^*$, defined as
\[ T^*f := \sup_{n \in \NN} T_n|f|, \]
for $f \in L^1_\mathrm{loc}(\RR^2)$.  The maximal operator $T^*$ is said to satisfy a Tauberian condition if for any $0 < \lambda < 1$ and any Borel set $B \subset \RR^2$ with finite Lebesgue measure, there exists a constant $C_\lambda > 0$ such that:
\[ | \{ x \in \RR^2 : T^* \chi_B (x) > \lambda \} | \leq C_\lambda |B|. \]
In their paper \cite{MR4603294}, the authors constructed a sequence $\{Q_n\}_{n \in \NN}$ of directional rectangles such that the maximal operator $T^*$ does not satisfy a Tauberian condition. One of the aims of this paper is to use our ``transfer'' results to obtain a process $\{R_n\}_{n \in \NN}$ such that the associated maximal ergodic operator $M^*$ does not satisfy a Tauberian condition either. Consequently, this process provides example of $p$-bad process, for any $1 \leq p < \infty$, in the ergodic context.\\
\indent On the other hand, we will also present some $p$-good processes $\{R_n\}_{n \in \NN}$ composed of \textit{directional} rectangles in the plane. \\
\indent Let us now formalize the definitions given above and state precisely our results.

\section{Results}

We consider a sequence $\{R_n\}_{n \in \NN} \subset \RR^2$ of rectangles containing the point $(0,0)$ and such that the sequence $\{l(R_n)\}_{n \in \NN}$ of their shortest sides tends to infinity as $n \to \infty$. Such a sequence is called a \textit{process}. Fix, as before, $S,T : X \to X$ a pair of ergodic, commuting, invertible, measure preserving, transformations on a probability space $ (X,\AAA,\mu)$, such that
\[\mu (\{x \in X : S ^iT^jx=x\})=0\]
for all $(i,j) \in \ZZ^2$, $(i,j) \neq (0,0)$. We wonder if the averages\label{PAGE - definitions process}
\[ M_nf = \frac{1}{\# (R_n \cap \ZZ^2)} \sum_{(i,j) \in R_n} f(S^iT^j) \]
converge almost everywhere as $n \to \infty$, for all $f : X \to \RR$ belonging to a given function space. We say that a process $\{R_n\}_{n \in \NN}$ is:

\begin{itemize}\label{PAGE - definitions of good and bad process}
	\item a \textit{$p$-good process} in case of the averages $M_nf$ converge almost everywhere for all $f \in L^p(X)$;
	\item a \textit{$p$-bad process} otherwise.
\end{itemize}

We define the \textit{slope} of a rectangle as the tangent of the angle formed between its longest side and the horizontal axis. To avoid ambiguity when the rectangle is a square, in this case we define its slope as the tangent of the smallest angle formed between any of its sides and the horizontal axis. Recall that a sequence of nonnegative real numbers $\{u_k\}_{k \in \NN}$ is \textit{lacunary} if there exists $\lambda \in (0,1)$ such that
\[ \fa k \in \NN, \quad  u_{k+1} \leq \lambda u_k.\]
As in the introduction, we consider the maximal operator $M^*$ associated to the averages $M_n$, given by:
\[ M^*f = \sup_{n \in \NN} M_n|f| \]
for $f \in L^p(X)$, $1 \leq p \leq \infty$. We recall what it means for $M^*$ to be of weak-type and to satisfy a Tauberian condition.\\
\indent For $1 \leq p < \infty$, we say that the maximal operator $M^*$ is of \textit{weak-type} $(p,p)$ if there exists a constant $C > 0$ such that for any $f \in L^p(X)$ and any $\lambda > 0$ one has
\begin{align*}
	\mu  (\{ x \in X : M^*f(x) > \lambda \}) \leq C  \left ( \frac{\|f\|_{L^p(X)}}{\lambda} \right )^p.
\end{align*}
We say that the operator $M^*$ satisfies a Tauberian condition if for any $0 < \lambda < 1$, there exists a constant $C_\lambda > 0$ such that for any measurable set $A \subset \AAA$ satisfying $0 < \mu(A) < \infty$, one has
\begin{align*}
	\mu (\{ x \in X : M^*\chi_A(x) > \lambda \}) \leq C_\lambda \mu(A).
\end{align*}
Our first result is the following: lacunary sets of slopes define good processes.

\begin{thm}\label{THM - good set}
	Let $\Omega \subset (0,1)$ be a lacunary sequence converging to $0$ and let $\{R_n\}_{n \in \NN}$ be a process such that for any $n \in \NN$, the slope of $R_n$ is in $\Omega$. Then $M^*$ is of weak-type $(p,p)$ for any $ 1 < p < \infty$.
\end{thm}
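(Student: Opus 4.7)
The plan is to reduce Theorem \ref{THM - good set} to a known maximal inequality for Euclidean averages over rectangles with lacunary orientations, via the transfer principle developed in Appendix \ref{APPENDIX - transfer lemma}. Concretely, I would first apply that transfer principle to deduce the weak-type $(p,p)$ bound for $M^*$ from the corresponding bound on the Euclidean maximal operator
\[ \widetilde M^* g(x) := \sup_{n \in \NN} \frac{1}{|R_n|} \int_{R_n + x} |g(y)|\, dy \qquad (g \in L^p(\RR^2)). \]
The hypothesis $l(R_n) \to \infty$ is what makes this legitimate: it ensures that $\# (R_n \cap \ZZ^2)$ and $|R_n|$ are comparable uniformly in $n$, so that replacing the counting average by the Lebesgue average only costs a universal constant.

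Next, I would pointwise dominate $\widetilde M^*$ by the full lacunary directional maximal operator
\[ M_\Omega g(x) := \sup_{\substack{R \ni x \\ \mathrm{slope}(R) \in \Omega}} \frac{1}{|R|} \int_R |g(y)|\, dy, \]
where the supremum runs over \emph{all} rectangles $R$ containing $x$ whose slope lies in $\Omega$, with arbitrary eccentricity and size. Since each translate $R_n + x$ is one of the rectangles appearing in this supremum, the inequality $\widetilde M^* g \leq M_\Omega g$ is immediate.

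The conclusion then reduces to the classical fact that, for a lacunary set of slopes $\Omega \subset (0,1)$, the operator $M_\Omega$ is of strong type $(p,p)$ on $L^p(\RR^2)$ for every $1 < p < \infty$. This is essentially the maximal theorem of Nagel--Stein--Wainger adapted to rectangles; it can be established by combining a Fefferman--Stein vector-valued estimate for the axis-parallel strong maximal function with a Córdoba--Fefferman-type covering lemma that exploits the lacunary separation between the admissible directions, after rotating one slope at a time. Strong-type boundedness implies weak-type $(p,p)$ boundedness, so the bound transfers back through Appendix \ref{APPENDIX - transfer lemma} to yield the weak-type $(p,p)$ bound for $M^*$ claimed in Theorem \ref{THM - good set}.

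The main obstacle I expect is not the lacunary maximal inequality itself, which is classical, but rather checking carefully that the transfer principle of Appendix \ref{APPENDIX - transfer lemma} genuinely applies to rectangles that are \emph{tilted} with respect to the coordinate axes, since $R_n \cap \ZZ^2$ no longer has the product structure exploited in axis-parallel settings; one must verify that the lattice-point count is comparable to $|R_n|$ uniformly in the slope, which requires $l(R_n) \to \infty$ to absorb boundary lattice-point errors uniformly. A secondary bookkeeping issue is the degenerate case of squares, whose slope is defined via the smallest angle formed with the horizontal axis, but these are still admissible rectangles in the class defining $M_\Omega$, so the argument goes through unchanged.
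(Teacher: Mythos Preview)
Your proposal is correct and follows essentially the same route as the paper: reduce via transfer to a maximal operator over rectangles in the plane, dominate pointwise by the full directional maximal operator $\MMM^*_\Omega$, and invoke the Nagel--Stein--Wainger $L^p$ bound for lacunary $\Omega$. The only routing difference is that the paper's Appendix~\ref{APPENDIX - transfer lemma} transfers $M^*$ to the \emph{discrete} operator $A^*$ on $\ell^p(\ZZ^2)$ rather than directly to your Euclidean $\widetilde M^*$, and the paper then separately compares $A^*$ with $\MMM^*_\Omega$ by pixelizing $\varphi\mapsto f_\varphi(x,y):=\varphi(\lfloor x\rfloor,\lfloor y\rfloor)$ and enclosing the lattice-thickened rectangle in a slightly larger rectangle of the same slope (using Lemma~\ref{LEM - Estimation volume - cardinal pour les process} for the cardinality/area comparison you anticipated).
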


\begin{rmq}
	As mentioned before, this also implies that $M^*$ satisfies a Tauberian condition.
\end{rmq}

By the Sawyer-Stein principle, we have the following immediate corollary.

\begin{cor}\label{COR - good set}
		Let $\Omega \subset (0,1)$ be a lacunary sequence converging to $0$ and let $\{R_n\}_{n \in \NN}$ be a process such that for any $n \in \NN$, the slope of $R_n$ is in $\Omega$. Then for any $1 < p < \infty$, $\{R_n\}_{n \in \NN}$ is a \textit{$p$-good process}.
\end{cor}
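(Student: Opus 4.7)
The plan is to reduce the ergodic statement to a classical $L^p$ boundedness result from the theory of differentiation of integrals, and then pass to the ergodic setting through the transfer principle of Appendix~\ref{APPENDIX - transfer lemma}.

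First, I introduce the geometric analogue of $M^*$, namely the maximal operator
\[ T^* f(x) \,=\, \sup_{n \in \NN} \frac{1}{|R_n|} \int_{R_n + x} |f(y)| \, \d y, \qquad f \in L^p(\RR^2). \]
Since the slope of each $R_n$ belongs to the lacunary set $\Omega \subset (0,1)$, $T^*$ is pointwise dominated by the directional maximal operator attached to $\Omega$ in the sense of Nagel--Stein--Wainger, i.e.\ the supremum of $|R|^{-1} \int_{R+x} |f|$ over \emph{all} planar rectangles $R$ with one side making an angle of tangent in $\Omega$ with the horizontal axis. The classical theorem on lacunary directions asserts that this directional maximal operator is of strong type $(p,p)$ on $L^p(\RR^2)$ for every $1 < p < \infty$; hence $T^*$ is of strong type $(p,p)$, and in particular of weak type $(p,p)$, on $L^p(\RR^2)$.

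Second, I apply the transfer principle of Appendix~\ref{APPENDIX - transfer lemma}, which converts weak-type $(p,p)$ boundedness of the geometric operator $T^*$ on $\RR^2$ into weak-type $(p,p)$ boundedness of the ergodic operator $M^*$ on $L^p(X)$, with the same shapes $R_n$. This step requires comparing the continuous average $|R_n|^{-1} \int_{R_n + x} f$ with the discrete ergodic average $\#(R_n \cap \ZZ^2)^{-1} \sum_{(i,j) \in R_n} f(S^i T^j x)$; the hypothesis $l(R_n) \to \infty$ is what makes this comparison work, through a standard Gauss-type lattice point count giving $\#(R_n \cap \ZZ^2) \asymp |R_n|$ for $n$ large, while the finitely many $R_n$ with small shortest side contribute a trivially bounded operator.

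The substantive obstacle lies in the first step: the $L^p$ boundedness of the directional maximal operator along a lacunary set of directions is the deep fact, classically established via a Littlewood--Paley decomposition adapted to the directions of $\Omega$ together with a Córdoba--Fefferman style covering lemma controlling the overlap of rectangles pointing in lacunary directions. Once this geometric result is granted, the passage to the ergodic setting is formal via the transfer lemma, and the weak-type $(p,p)$ bound on $M^*$ asserted in the theorem follows at once.
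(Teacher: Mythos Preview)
Your outline is essentially the paper's proof of Theorem~\ref{THM - good set}: dominate the discrete averages by the directional maximal operator $\MMM_\Omega^*$ on $\RR^2$, invoke the Nagel--Stein--Wainger bound for lacunary $\Omega$, and transfer to the ergodic operator via the lemmas of Appendix~\ref{APPENDIX - transfer lemma} (in the paper this transfer passes through the discrete operator $A^*$ on $\ZZ^2$ rather than directly from $T^*$ on $\RR^2$ to $M^*$, but your description of the lattice-point comparison makes clear you have the same mechanism in mind).

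The issue is that you stop one step short of the stated Corollary. Your final sentence concludes with ``the weak-type $(p,p)$ bound on $M^*$ asserted in the theorem'', which is exactly the content of Theorem~\ref{THM - good set}, not of the Corollary. The Corollary asserts that $\{R_n\}$ is a \emph{$p$-good process}, i.e.\ that the averages $M_nf$ converge almost everywhere for every $f\in L^p(X)$. To pass from the weak-type $(p,p)$ inequality for $M^*$ to almost-everywhere convergence you must invoke the Sawyer--Stein principle (Theorem~\ref{THM - Sawyer Stein principle}); this is precisely the one-line argument the paper gives for the Corollary, and it is missing from your write-up. You also do not address the endpoint $p=\infty$ included in the Corollary; this is immediate once any finite exponent $p>1$ is handled, since $L^\infty(X)\subset L^p(X)$ on a probability space, but it should be said.
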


\begin{rmq}
	In fact, the result holds if $\Omega$ is finitely lacunary, see \cite{SjogrenSjolin1981} for definitions.
\end{rmq}

\begin{rmq}
	Since one has $L^\infty(X) \subset L^p(X)$ for every $1 < p < \infty$, it follows that, under the same assumptions as in Corollary \ref{COR - good set}, the process $\{R_n\}_{n \in \NN}$ is also $\infty$-good.
\end{rmq}

On the other hand, processes along directional rectangles can also be bad. As announced, we will adapt a construction presented in \cite{MR4603294} in a differentiation setting into an ergodic context. More precisely, we have the following theorem.

\begin{thm}\label{THM - bad set}
	Let $\Omega = \{ u_k^{-1}\}_{k \in \NN^*}$, where $\{u_k\}_{k \in \NN^*}$ is an increasing sequence of nonnegative real numbers that satisfies the following two conditions (we let $u_0:=0$):
	\begin{itemize}
		\item[$(i)$] there exists a constant $c > 0$ such that for all $k \in \NN^*$, one has
		\begin{align}\label{EQ - hypothèse 1+b2 >= c(bk-bk-1)}
			1+u_{k-1}^2 \geq c(u_k-u_{k-1})^2 ;
		\end{align}
		\item[$(ii)$] one has
		\begin{align}
			\sup_{n\in\NN}\sup_{1\leq l\leq n} \left(\frac{u_{n+2l}-u_{n+l}}{u_{n+l}-u_n}+\frac{u_{n+l}-u_n}{u_{n+2l}-u_{n+l}}\right)<\infty.
		\end{align}
	\end{itemize}
	Then, there exists a process $\{R_k\}_{k \in \NN}$ such that:
	\begin{itemize}
		\item[$(i)$] for each $k \in \NN^*$, the slope of the rectangle $R_k$ is $\frac{1}{u_k}$;
		\item[$(ii)$] $\{R_n\}_{n \in \NN}$ is a $p$-bad process for all $1 \leq p < \infty$.
	\end{itemize}
	Moreover, any other process $\{\widetilde{R}_k\}_{k \in \NN}$ defined by $\widetilde{R}_k := \delta_k R_k$ with $\delta_k > 1$, is a $p$-bad process for all $1 \leq p < \infty$.
\end{thm}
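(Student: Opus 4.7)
I would prove Theorem \ref{THM - bad set} by importing the differentiation-theoretic construction of D'Aniello, Gauvan, Moonens and Rosenblatt \cite{MR4603294} into the ergodic setting by means of the transfer principles developed in Appendix \ref{APPENDIX - transfer lemma}, and then concluding with the Sawyer--Stein principle (Theorem \ref{THM - Sawyer Stein principle}). Under hypotheses $(i)$ and $(ii)$ on $\{u_k\}$, \cite{MR4603294} produces a sequence $\{Q_k\}_{k\in\NN^*}$ of tilted rectangles in $\RR^2$ centered at the origin, with slopes exactly $1/u_k$, such that the geometric maximal operator $T^*f = \sup_k |Q_k|^{-1}\int_{Q_k+\cdot}|f|$ fails the Tauberian condition: there exist $\lambda_0 \in (0,1)$ and Borel sets $B_N \subset \RR^2$ with $|\{T^*\chi_{B_N} > \lambda_0\}|/|B_N| \to \infty$ as $N \to \infty$.

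\textbf{From the differentiation to the ergodic setting.} I would first rescale the $Q_k$ by a common factor into a process $\{R_k\}$ satisfying $l(R_k)\to \infty$; since the Tauberian deficit involves only dimensionless ratios of measures, this global rescaling preserves the failure of the Tauberian condition. I would then apply the transfer result of Appendix \ref{APPENDIX - transfer lemma} to conclude that the ergodic maximal operator $M^*$ associated with $\{R_k\}$ and any pair of commuting ergodic measure-preserving transformations $S,T$ also fails the Tauberian condition. Since weak-type $(p,p)$ implies the Tauberian condition for each $1 \leq p < \infty$, $M^*$ cannot be of weak-type $(p,p)$ for any such $p$; Theorem \ref{THM - Sawyer Stein principle} then produces, for every such $p$, a function $f \in L^p(X)$ whose averages $M_n f$ fail to converge almost everywhere. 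This gives conclusions $(i)$ and $(ii)$.

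\textbf{Dilation robustness and main obstacle.} For the "moreover" part with $\widetilde{R}_k = \delta_k R_k$ and $\delta_k > 1$, I would revisit the construction of the bad set $B$ in \cite{MR4603294}: $B$ is built as a union of thin parallelograms aligned with the long axes of the $R_k$, and the lower bound $T^*\chi_B > \lambda_0$ comes from the fact that the density of $B$ in a translate $R_k+x$ exceeds $\lambda_0$ for $x$ in a relatively large auxiliary set. Dilating $R_k$ to $\widetilde{R}_k$ stretches the averaging window in both directions, but provided the parallelograms composing $B$ are long enough along their own directions and the perpendicular spacing of slopes (controlled by hypothesis $(ii)$) is chosen carefully, the density of $B$ inside $\widetilde{R}_k + x$ should remain bounded below by a positive constant independent of $\delta_k$; combined with the transfer-and-Sawyer--Stein step above, this gives $p$-badness of $\{\widetilde{R}_k\}$. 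The principal difficulty, in my view, is precisely this last step: tracking the geometric constants in the construction of \cite{MR4603294} so that the density estimates survive arbitrary (and possibly unbounded) dilation factors $\delta_k > 1$, rather than appealing to the transfer principle as a black box.
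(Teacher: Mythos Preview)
Your overall architecture---import the construction of \cite{MR4603294}, transfer, then apply Sawyer--Stein---matches the paper's, but there is a genuine gap in the middle step. The transfer lemmas of Appendix~\ref{APPENDIX - transfer lemma} (Lemmas~\ref{LEM - Transfert lemma 1<p<infty} and~\ref{LEM - Transfert lemma Tauberian condition}) relate the ergodic maximal operator $M^*$ to the \emph{discrete} maximal operator $A^*$ on $\ell^\infty(\ZZ^2)$, not to the continuous operator $T^*$ on $L^1_{\mathrm{loc}}(\RR^2)$. So knowing that $T^*$ fails a Tauberian condition is not, by itself, what the transfer lemma consumes; you must show that $A^*$ does. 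Your proposed bridge---``rescale the $Q_k$ by a common factor''---cannot work as stated: in the differentiation setting the $Q_k$ have diameters tending to~$0$, so no single scale produces a process with $l(R_k)\to\infty$, and more importantly a global dilation of a continuous Tauberian failure says nothing about lattice-point counts.

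What the paper actually does is discretize the \emph{geometric lemmas} of \cite{MR4603294} one by one, using the volume--cardinality comparison (Lemma~\ref{LEM - Estimation volume - cardinal}) at scales $\delta_n$ chosen separately for each dyadic block $2^n\le k<2^{n+1}$ and large enough that cardinalities track volumes up to fixed multiplicative constants (Lemmas~\ref{LEM - translation trapèze dans Z2} and~\ref{LEM - Perron tree dans Z2}). The rectangles are then $R_k:=\delta_n P_k$ for $2^n\le k<2^{n+1}$, and Proposition~\ref{PROP - not of weak-type in Z2} reproduces the Tauberian-deficit argument directly at the lattice level for $A^*$. This viewpoint also shows that you have misjudged the difficulty of the ``moreover'' clause: the scales in Lemma~\ref{LEM - Estimation volume - cardinal} are \emph{thresholds}---once $\delta$ is large enough that $\#(\delta E\cap\ZZ^2)\asymp|\delta E|$, any further dilation preserves the comparison---so replacing $R_k$ by $\delta_k R_k$ with $\delta_k>1$ keeps every estimate in the proof intact and costs essentially nothing; there is no need to revisit the geometry of \cite{MR4603294} or track density constants under unbounded dilation.
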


\begin{exmpl}
For $s >0$, the set of directions $\{ k^{-s} : k \in \NN^* \}$ satisfies the condition of Theorem \ref{THM - bad set}.
\end{exmpl}

\begin{rmq}
	If $\{ R_n\}_{n \in \NN}$ is a $p$-good process for $1 \leq p < \infty$, it can be shown using standard arguments that the limit of the averages $M_nf$ for $f \in L^p(X)$ is constant almost everywhere and
	\[ \lim_{n \to \infty} M_n f(x) = \int_X f \d \mu \]
	for almost every $x \in X$.
\end{rmq}

\section{Main tools}

We introduce two key tools to address our problems: a comparison between the volume of a set and the number of integer points it contains, and a ``transfer'' result, as outlined in the introduction.

\subsection{Comparison between volume and cardinality}\label{SUBSECTION - Comparison between volume and cardinality}

The following lemma, although quite simple, will be essential to transfer all the volume estimates of subsets of $\RR^2$ to cardinality estimates in $\ZZ^2$, especially estimates used in the construction of \cite{MR4603294} in $\RR^2$.

\begin{lem}\label{LEM - Estimation volume - cardinal}
	Let $ E \subset \RR^2$ be a compact convex subset with piecewise smooth boundary. Then, there exists a scale $ \delta_0 > 1 $ such that for any $ \delta > \delta_0 $ 
	\[ \frac{1}{2} \leq \frac{\# ( \delta E \cap \ZZ^2 )}{|\delta E|} \leq \frac{3}{2} . \]
\end{lem}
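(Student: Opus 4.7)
The plan is to use the classical lattice-point-vs-area comparison for a scaled body, with the error controlled by the area of a thin tubular neighborhood of $\partial(\delta E)$.

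First, I would set $Q := [-1/2,\, 1/2]^2$ so that the family $\{n+Q\}_{n \in \ZZ^2}$ tiles $\RR^2$ up to a null set, and introduce the ``inner'' and ``touching'' families of lattice points
\[ A_\delta := \{ n \in \ZZ^2 : n+Q \subset \delta E \}, \qquad B_\delta := \{ n \in \ZZ^2 : (n+Q) \cap \delta E \neq \emptyset \}. \]
Since each unit square has area $1$, the inclusions
\[ \bigsqcup_{n \in A_\delta}(n+Q) \subset \delta E \subset \bigcup_{n \in B_\delta}(n+Q) \qquad \text{and} \qquad A_\delta \subset \delta E \cap \ZZ^2 \subset B_\delta \]
imply that both $|\delta E|$ and $\#(\delta E \cap \ZZ^2)$ lie in the interval $[\#A_\delta,\, \#B_\delta]$, hence
\[ \bigl|\, \#(\delta E \cap \ZZ^2) - |\delta E|\, \bigr| \leq \#B_\delta - \#A_\delta. \]

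Next, I would bound the right-hand side by the area of a neighborhood of $\partial(\delta E)$. Any $n \in B_\delta \setminus A_\delta$ has $n+Q$ meeting both $\delta E$ and its complement, so $n+Q$ crosses $\partial(\delta E)$; since $Q$ has diameter $\sqrt{2}$, this forces
\[ \bigsqcup_{n \in B_\delta \setminus A_\delta}(n+Q) \subset V_\delta := \bigl\{ x \in \RR^2 : d(x, \partial(\delta E)) \leq \sqrt{2} \bigr\}, \]
so that $\#B_\delta - \#A_\delta \leq |V_\delta|$. Because $E$ is convex and compact with piecewise smooth boundary, $\delta E$ has the same properties with perimeter equal to $\delta$ times that of $E$; Steiner's formula (applied to $\delta E$ and to its inner parallel body at radius $\sqrt{2}$) then yields
\[ |V_\delta| \leq C_1\, \delta + C_2 \]
for constants $C_1, C_2 > 0$ depending only on $E$.

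Combining both estimates with $|\delta E| = \delta^2 |E|$ gives
\[ \left| \frac{\#(\delta E \cap \ZZ^2)}{|\delta E|} - 1 \right| \leq \frac{C_1\, \delta + C_2}{\delta^2\, |E|}, \]
which tends to $0$ as $\delta \to \infty$, so the ratio lies in $[1/2,\, 3/2]$ for every $\delta$ larger than some threshold $\delta_0 > 1$. The only genuinely non-routine step is the linear-in-$\delta$ bound on $|V_\delta|$: it is the convexity of $E$, combined with the rectifiability of its boundary, that makes Steiner's formula applicable and gives the correct $O(\delta)$ growth rate, and it is precisely here that the geometric hypothesis on $E$ is used.
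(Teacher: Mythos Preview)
Your proof is correct and follows essentially the same approach as the paper's: both sandwich the lattice-point count and the area using unit cubes attached to lattice points, then bound the discrepancy by the area of an $O(1)$-tubular neighborhood of $\partial(\delta E)$, which grows like $O(\delta)$ while $|\delta E|$ grows like $\delta^2$. Your bookkeeping via the inner and outer families $A_\delta$, $B_\delta$ and the explicit appeal to Steiner's formula is a bit more careful than the paper's terser version, but the underlying argument is the same.
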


\begin{proof}
	Let $ Q(x,r) $ be the cube centered at $ x \in \RR^2 $ with side-length $ r \geq 0 $, and define the sets
	\[E^- :=  \{ x \in E : d(x,\partial E) > \sqrt{2} \}, \quad E^+ := E \cup \{x \in \RR^2 : d(x,\partial E) \leq \sqrt{2}\}. \]
	Then, for $ \delta > 1 $, one has
	\[ \delta E^- \subset \bigcup_{x \in \delta E \cap \ZZ^2} Q(x,1) \subset \delta E^+.\]
	This implies that
	\[ |\delta E^-| \leq \left | \bigcup_{x \in \delta E \cap \ZZ^2} Q(x,1) \right | = \# \left ( \delta E \cap \ZZ^2 \right ) \leq |\delta E^+|.\]
	By connectedness, if $ \delta E \cap \mathbb{Z}^2 $ is empty, then so is $ \delta E^-$.  
	Since $ |\delta E^\pm| = |\delta E| \pm O(\delta) $	and $ |\delta E| = \delta^2 |E|$, we finally obtain
	\[ \frac{\# \left ( \delta E \cap \ZZ^2 \right ) }{|\delta E|} = 1 + O \left ( \frac{1}{\delta} \right ),\]
	which concludes the proof.
\end{proof}

\begin{rmq}
	For $ d \geq 2 $, if $ E $ is a subset of $\RR^d $ with the same assumptions as before, then the same proof gives us $ |\# (\delta E \cap \ZZ^d) - \mathrm{Vol}(\delta E)| = O(\delta^{d-1}) $.
\end{rmq}

If the set $E$ is a rectangle, we have a more precise statement, since we can explicitly compute $|E^-|$ and $|E^+|$. The following refinement will be useful in the future.

\begin{lem}\label{LEM - Estimation volume - cardinal pour les process}
	Let $\{R_n\}_{n \in \NN}$ be a process in $\RR^2$. Then, there exists an integer $N \in \NN$ such that for any $n \geq N$, one has
	\[ \frac{1}{2} \leq \frac{\# ( R_n \cap \ZZ^2 )}{|R_n|} \leq \frac{3}{2} . \]
\end{lem}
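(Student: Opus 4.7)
The plan is to apply the argument of Lemma \ref{LEM - Estimation volume - cardinal} directly to each $R_n$ rather than to a fixed set scaled by $\delta$. For every $n$ write $a_n := l(R_n)$ for the shortest side of $R_n$ and $b_n$ for its longest side, so $a_n \leq b_n$ and $a_n \to \infty$ by definition of a process. With $R_n^-$ and $R_n^+$ defined exactly as $E^-$ and $E^+$ in the previous proof, the same geometric argument yields
\[ R_n^- \subset \bigcup_{z \in R_n \cap \ZZ^2} Q(z,1) \subset R_n^+, \]
whence $|R_n^-| \leq \#(R_n \cap \ZZ^2) \leq |R_n^+|$.

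Next I would compute the two areas explicitly. Since $R_n$ is a (possibly tilted) rectangle, its shape-dependent quantities are invariant under rotation: as soon as $a_n > 2\sqrt{2}$, the set $R_n^-$ is the inner rectangle of dimensions $(a_n - 2\sqrt{2}) \times (b_n - 2\sqrt{2})$, while $R_n^+$ is the Minkowski sum of $R_n$ with a closed disk of radius $\sqrt{2}$, whose area decomposes as the rectangle itself, four lateral strips, and four corner quarter-disks assembling into a full disk, giving $a_n b_n + 2\sqrt{2}(a_n + b_n) + 2\pi$. Dividing by $|R_n| = a_n b_n$ then yields
\[ \frac{|R_n^-|}{|R_n|} = \left(1 - \frac{2\sqrt{2}}{a_n}\right)\left(1 - \frac{2\sqrt{2}}{b_n}\right), \qquad \frac{|R_n^+|}{|R_n|} = 1 + \frac{2\sqrt{2}}{a_n} + \frac{2\sqrt{2}}{b_n} + \frac{2\pi}{a_n b_n}. \]

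It will then remain to observe that, since $b_n \geq a_n$ and $a_n \to \infty$, both ratios above tend to $1$, so there is an integer $N$ beyond which the first is at least $1/2$ and the second is at most $3/2$; the announced inequalities follow. The only point that requires a moment's care is that this convergence be uniform in the orientation of $R_n$ and in the aspect ratio $b_n/a_n$ (which is allowed to be unbounded), but this is automatic because each correction term is controlled by $1/a_n$ (using $a_n \leq b_n$), and $a_n = l(R_n) \to \infty$ by the very definition of a process.
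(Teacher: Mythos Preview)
Your proof is correct and follows essentially the same approach as the paper: both sandwich $\#(R_n\cap\ZZ^2)$ between $|R_n^-|$ and $|R_n^+|$, compute these areas explicitly for a rectangle, and observe that the resulting ratios tend to $1$ since $l(R_n)\to\infty$. The only cosmetic difference is that the paper takes $R_n^+$ to be the concentric \emph{rectangle} with sides lengthened by $2\sqrt{2}$ (square corners, constant term $8$), whereas you keep the distance-based definition from the previous lemma (rounded corners, constant term $2\pi$); this has no effect on the argument.
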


\begin{proof}
	Let $n \in \NN$. We define $R_n^+$ as the rectangle with the same center and orientation as $R_n$, obtained by extending each side by $\sqrt{2}$ at both ends. Similarly, define $R_n^-$ by shortening each side by $\sqrt{2}$ at both ends. By taking $n$ large enough, we can assume that $l(R_n) > 2\sqrt{2}$. Elementary computations lead to
	\[ \begin{cases}
		|R_n^-| = (l(R_n)-2\sqrt{2})(L(R_n)-2\sqrt{2}) \\
		|R_n^+|= (l(R_n)+2\sqrt{2})(L(R_n)+2\sqrt{2})
	\end{cases} \]
	where $L(R_n)$ denotes the length of the largest side of the rectangle $R_n$. As before, one has
	\[ R_n^- \subset \bigcup_{x \in R_n \cap \ZZ^2} Q(x,1) \subset R_n^+ \]
	and
	\[ (l(R_n)-2\sqrt{2})(L(R_n)-2\sqrt{2}) \leq \# (R_n \cap \ZZ^2) \leq (l(R_n)+2\sqrt{2})(L(R_n)+2\sqrt{2}).\]
	This yields
	\[ \frac{(l(R_n)-2\sqrt{2})(L(R_n)-2\sqrt{2})}{l(R_n)L(R_n)} \leq \frac{\# (R_n \cap \ZZ^2)}{|R_n|} \leq \frac{(l(R_n)+2\sqrt{2})(L(R_n)+2\sqrt{2}))}{l(R_n)L(R_n)}. \]
	By taking the limit as $n \to \infty$, we get
	\[ \frac{\# (R_n \cap \ZZ^2)}{|R_n|} \underset{n \to \infty}{\lra}1, \]
	which proves the lemma.
\end{proof}

\subsection{Transfer lemmas}

By identifying the pair $(k,l) \in \ZZ^2$ and the transformation $S^kT^l : X \to X$, the study of ergodic averages can be reduced to the study of translation averages on $\ZZ^2$. As we shall see, studying averages in $\ZZ^2$ can be easier than studying ergodic averages for two reasons: we can rely on elementary geometric arguments, and we can use some well-known results from the theory of differentiation of integrals. For $ \varphi \in l^\infty(\ZZ^2) $, $ (k,l) \in \ZZ^2 $ and $ n \in \NN^* $ we define
\[ A_n\varphi(k,l) = \frac{1}{\# (R_n \cap \ZZ^2)} \sum_{(i,j) \in R_n} \varphi(k+i,l+j) \quad \text{and} \quad A^*\varphi(k,l) = \sup_{n \in \NN^*} A_n|\varphi|(k,l).\]
We are going to study the connection between this maximal function and the maximal function $M^*$, defined as
\[ M^*f = \sup_{n \in \NN} M_n|f| \]
for $f \in L^p(X)$, $1 \leq p \leq \infty$, and where as before
\[ M_nf = \frac{1}{\# (R_n \cap \ZZ^2)} \sum_{(i,j) \in R_n} f(S^iT^j). \]

\begin{lem}\label{LEM - Transfert lemma 1<p<infty}
	Let $1 \leq p < \infty$.	The following are equivalent.
	\begin{enumerate}[label=($\roman*$)]
		\item The maximal operator $M^*$ is of weak-type $(p,p)$.
		\item The maximal operator $A^*$ is of weak-type $(p,p)$ in $l^p(\ZZ^2)$, \textit{i.e.} there exists a constant $C > 0$ such that for all $\lambda > 0$ and for all $\varphi \in l^p(\ZZ^2)$
		\begin{align}\label{EQ - défintion du weak-type dans lp(Z2)}
			\# \{ (k,l) \in \ZZ^2 : A^*\varphi (k,l) \geq \lambda\} \leq C \left( \frac{\| \varphi \|_{l^p(\ZZ^2)}}{\lambda} \right )^p .
		\end{align}
	\end{enumerate}
\end{lem}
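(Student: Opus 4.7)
The plan is to run a two-way transfer between the ergodic setting on $X$ and the translation setting on $\ZZ^2$.

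For the direction $(ii) \Rightarrow (i)$, I would use the classical Calderón transfer. Fix $f \in L^p(X)$, $\lambda > 0$, a truncation level $n_0 \in \NN$, and write $D := \max_{n \leq n_0} \mathrm{diam}(R_n)$. For each integer $N > D$ and each $x \in X$, set $\varphi_x(k,l) := f(S^kT^lx)$ if $(k,l) \in [-N,N]^2$ and $\varphi_x(k,l) := 0$ otherwise; $\mu$-invariance of $S$ and $T$ then gives $\int_X \|\varphi_x\|_{l^p(\ZZ^2)}^p\,\d\mu(x) = (2N+1)^2 \|f\|_{L^p(X)}^p$. For every interior lattice point $(k,l) \in [-(N-D),N-D]^2$ and every $n \leq n_0$, the shifted rectangle $R_n+(k,l)$ stays inside $[-N,N]^2$, so $A_n|\varphi_x|(k,l) = M_n|f|(S^kT^lx)$. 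Applying $(ii)$ to $\varphi_x$, integrating over $X$, and using invariance once more to replace a lattice count by the $\mu$-measure of the level set, one obtains
\[
(2(N-D)+1)^2\,\mu\bigl(\bigl\{\sup_{n\leq n_0} M_n|f| > \lambda\bigr\}\bigr) \leq C\,(2N+1)^2 \frac{\|f\|_{L^p}^p}{\lambda^p}.
\]
Letting $N \to \infty$ and then $n_0 \to \infty$ via monotone convergence delivers the weak-type $(p,p)$ bound for $M^*$.

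For the reverse direction $(i) \Rightarrow (ii)$, I would mimic any finitely supported $\varphi \in l^p(\ZZ^2)$ inside $X$ via a Rokhlin tower for the $\ZZ^2$-action of $(S,T)$. Assuming this action is essentially free (see below), the multidimensional Kakutani--Rokhlin / Ornstein--Weiss lemma provides, for every $N \in \NN$ and $\eta > 0$, a base $F \subset X$ such that the translates $\{S^iT^jF\}_{(i,j)\in[-N,N]^2}$ are pairwise disjoint and $\mu(F) \geq (1-\eta)/(2N+1)^2$. If $\varphi$ is supported in $[-M,M]^2$, pick $N$ much larger than both $M$ and the diameter $D$ of the relevant $R_n$'s, and set
\[
f := \sum_{(m,n) \in [-N,N]^2} \varphi(m,n)\,\chi_{S^mT^nF}.
\]
Then $\|f\|_{L^p(X)}^p = \mu(F)\,\|\varphi\|_{l^p(\ZZ^2)}^p$, and for every $y \in F$ and every $(k,l) \in [-(N-D),N-D]^2$, disjointness of the tower forces $M_n|f|(S^kT^ly) = A_n|\varphi|(k,l)$ for all $n \leq n_0$. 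Applying $(i)$ to $f$, bounding $\mu(\{M^*f > \lambda\})$ from below by summing the disjoint pieces $S^kT^lF$ for which $\sup_{n \leq n_0} A_n|\varphi|(k,l) > \lambda$, and cancelling the common factor $\mu(F)$, one reaches
\[
\#\bigl\{(k,l) \in [-(N-D),N-D]^2 : \sup_{n \leq n_0} A_n|\varphi|(k,l) > \lambda\bigr\} \leq C\,\|\varphi\|_{l^p(\ZZ^2)}^p / \lambda^p ,
\]
and $N, n_0 \to \infty$ together with a density argument in $l^p(\ZZ^2)$ yield $(ii)$.

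The real obstacle is the Rokhlin step in the reverse direction: individual ergodicity of $S$ and $T$ does not force the joint $\ZZ^2$-action to be essentially free (consider $S = T$). I would circumvent this by passing to a product extension $\widetilde X := X \times [0,1)$ with $\widetilde S(x,y) := (Sx, y+\alpha)$ and $\widetilde T(x,y) := (Tx, y+\beta)$ where $1,\alpha,\beta$ are rationally independent; the joint action on $\widetilde X$ is then free, the weak-type $(p,p)$ hypothesis on $M^*$ passes to $\widetilde X$ via Fubini on the second factor, and the combinatorial operator $A^*$ on $\ZZ^2$ is untouched by the enlargement. The remaining points — the boundary layer of width $\sim D$ that disappears when $N/D \to \infty$, truncation of the supremum, and density in $l^p(\ZZ^2)$ — are routine.
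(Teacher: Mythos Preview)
Your plan is essentially the paper's: Appendix~B runs exactly the Calder\'on transfer for $(ii)\Rightarrow(i)$ via the truncated functions $\varphi_x$ and a Fubini count, and for $(i)\Rightarrow(ii)$ it builds a two-dimensional Rokhlin tower, defines $f_N=\varphi\circ\gamma_N$ on the tower levels, and compares level sets after the same truncation-and-limit manoeuvres you describe. At that level your proposal is correct and matches the paper.

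Where you go beyond the paper is in noticing that the Rokhlin step needs the joint $\ZZ^2$-action to be essentially free, which individual ergodicity of $S$ and $T$ does not guarantee; the paper simply cites a reference for the tower without comment. Your diagnosis is right, but your proposed cure is not. On the extension $\widetilde X=X\times[0,1)$ with $\widetilde S(x,y)=(Sx,y+\alpha)$ and $\widetilde T(x,y)=(Tx,y+\beta)$, the extended average
\[
\widetilde M_n\widetilde f(x,y)=\frac{1}{\#(R_n\cap\ZZ^2)}\sum_{(i,j)\in R_n}\widetilde f\bigl(S^iT^jx,\;y+i\alpha+j\beta\bigr)
\]
mixes \emph{different} $y$-fibres in each summand, so for fixed $y$ it is not $M_n$ applied to any single function on $X$; hence the weak-type bound for $M^*$ on $X$ does not pass to $\widetilde M^*$ on $\widetilde X$ ``via Fubini on the second factor'' as you assert. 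In fact, without a freeness-type hypothesis the implication $(i)\Rightarrow(ii)$ can fail outright: on a one-point space every $M^*$ is trivially weak-type regardless of whether $A^*$ is. The honest repair is to add freeness (aperiodicity) of the joint $\ZZ^2$-action as a standing assumption---which is precisely what the paper's Rokhlin citation is tacitly presupposing---rather than to try to manufacture it by extension.
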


We also have a transfer result about operators satisfying a Tauberian condition.

\begin{lem}\label{LEM - Transfert lemma Tauberian condition}
	The following are equivalent.
	\begin{enumerate}[label=($\roman*$)]
		\item The maximal operator $M^*$ satisfies a Tauberian condition.
		\item The maximal operator $A^*$ satisfies a Tauberian condition, \textit{i.e.} for any $\lambda > 0$ there exists a constant $C_\lambda > 0$ such that for all sets $E \subset \ZZ^2$ satisfying $0 < \# E < \infty$, one has
		\begin{align}\label{EQ - défintion du weak-type dans l infty(Z2)}
			\# \{ (k,l) \in \ZZ^2 : A^*\chi_E (k,l) \geq \lambda\} \leq C_\lambda \# E.
		\end{align}
	\end{enumerate}
\end{lem}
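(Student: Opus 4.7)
The plan is to adapt the Calderón-type transference argument used for Lemma~\ref{LEM - Transfert lemma 1<p<infty} to the endpoint-style Tauberian condition, which is formulated in terms of indicators of sets of finite measure (resp.\ finite cardinality) rather than $L^p$-norms.

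For $(ii) \Rightarrow (i)$, given $A \in \AAA$ with $0 < \mu(A) < \infty$, I would set for each $x \in X$ the function $\varphi_x(k,l) := \chi_A(S^kT^lx) = \chi_{E_x}(k,l)$, where $E_x := \{(k,l) \in \ZZ^2 : S^kT^lx \in A\}$. The commutativity of $S,T$ yields the pointwise identity $A^*\varphi_x(k,l) = M^*\chi_A(S^kT^lx)$. Since $E_x$ may be infinite, I would introduce the truncated maximal operator $A^*_K := \sup_{1 \leq n \leq K} A_n$ and replace $\varphi_x$ by its restriction to a large box $B_{N'} := \{-N',\ldots,N'\}^2$. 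Choosing $N'$ large enough relative to $N$ and the diameter of $R_K$ ensures that $A^*_K\chi_{E_x \cap B_{N'}}(k,l) = A^*_K\varphi_x(k,l)$ for every $(k,l) \in B_N := \{-N,\ldots,N\}^2$, so applying the hypothesis to $\chi_{E_x \cap B_{N'}}$ and integrating in $x$ (using the $S^kT^l$-invariance of $\mu$) gives
\[ (2N+1)^2 \, \mu\bigl(\{M^*_K\chi_A \geq \lambda\}\bigr) \leq C_\lambda \, (2N'+1)^2 \, \mu(A). \]
Letting $N \to \infty$ so that $N'/N \to 1$, then $K \to \infty$, yields the Tauberian inequality for $M^*$.

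For $(i) \Rightarrow (ii)$, given a finite $E \subset \ZZ^2$ I would realize the pattern $\chi_E$ as an orbit segment inside a suitable ergodic $\ZZ^2$-system. Since, as remarked in the paper, the Tauberian property of $M^*$ does not depend on the specific choice of $(S,T)$, I may fix a free ergodic $\ZZ^2$-action enjoying a Rokhlin-tower property, e.g.\ a Bernoulli shift on $\{0,1\}^{\ZZ^2}$. For large $N$ with $E \subset B_N$, Rokhlin's lemma for $\ZZ^2$-actions yields a set $F \subset X$ with $\mu(F) \approx (2N+1)^{-2}$ whose translates $\{S^kT^lF : (k,l) \in B_N\}$ are pairwise disjoint. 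Setting $A := \bigsqcup_{(k,l) \in E} S^kT^lF$, a direct computation gives $M_n\chi_A(S^kT^lx) = A_n\chi_E(k,l)$ for every $x \in F$, every $(k,l) \in B_M$, and every $n \leq K$, provided $M$ and $K$ are small enough that every translated rectangle stays inside the tower. Hence
\[ \{M^*_K\chi_A \geq \lambda\} \supset \bigsqcup_{\substack{(k,l) \in B_M \\ A^*_K\chi_E(k,l) \geq \lambda}} S^kT^lF, \]
and the ergodic Tauberian inequality $\mu(\{M^*\chi_A \geq \lambda\}) \leq C_\lambda \mu(A) = C_\lambda \#E \cdot \mu(F)$ gives, after dividing by $\mu(F)$ and sending $M,K \to \infty$, the desired bound $\#\{(k,l) \in \ZZ^2 : A^*\chi_E(k,l) \geq \lambda\} \leq C_\lambda \#E$.

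The main obstacle — absent in the $L^p$-transfer of Lemma~\ref{LEM - Transfert lemma 1<p<infty} — is that the Tauberian hypothesis is set-based rather than norm-based. This forces a double limiting procedure in both directions: simultaneously truncating the sup defining the maximal operator (to parameter $K$) and restricting to an interior box small enough compared to an exterior buffer (or to the Rokhlin tower) so that all translated rectangles $R_n + (k,l)$ stay inside the available window. Once this bookkeeping is in place, the proof reduces in both directions to the standard Calderón transference scheme.
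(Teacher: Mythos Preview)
Your proposal is essentially correct and follows the same scheme as the paper: the paper's proof of Lemma~\ref{LEM - Transfert lemma Tauberian condition} simply says ``follow exactly the same proof as Lemma~\ref{LEM - Transfert lemma 1<p<infty}'' and then checks the one extra point, namely that in both directions the correspondence $\varphi \leftrightarrow f$ (resp.\ $f \leftrightarrow \varphi_x$) takes indicator functions to indicator functions. In the direction $(i)\Rightarrow(ii)$ the paper sets $f_N := \chi_E \circ \gamma_N = \chi_{\widetilde{E}}$ with $\widetilde{E}=\bigsqcup_{(k,l)\in E\cap\{-N,\ldots,N-1\}^2} S^kT^l\Omega_N$, which is exactly your set $A$; in the direction $(ii)\Rightarrow(i)$ the paper takes $\varphi_x(k,l)=\chi_A(S^kT^lx)$ restricted to a box, which is your $\chi_{E_x\cap B_{N'}}$. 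Your truncation/buffering bookkeeping just makes explicit what is already implicit in the Claims of the proof of Lemma~\ref{LEM - Transfert lemma 1<p<infty}.

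One logical slip to fix: in the direction $(i)\Rightarrow(ii)$ you invoke the remark that ``the Tauberian property of $M^*$ does not depend on $(S,T)$'' in order to pass to a convenient Bernoulli system. That remark is stated in the paper \emph{as a consequence} of Lemmas~\ref{LEM - Transfert lemma 1<p<infty} and~\ref{LEM - Transfert lemma Tauberian condition}, so appealing to it here is circular. The repair is immediate: Rokhlin's lemma (as cited in the proof of Lemma~\ref{LEM - Transfert lemma 1<p<infty}) applies to the \emph{given} commuting ergodic pair $(S,T)$, so you can build your tower base $F=\Omega_N$ directly in the original system without switching models.
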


\begin{rmq}
	As announced in the introduction, we observe that the weak-type inequalities for $M^*$, and thus all the convergence results via Sawyer-Stein principle, do not depend on $S$ and $T$.
\end{rmq}

The proofs of the two above lemmas are given in Appendix \ref{APPENDIX - transfer lemma}. We can now move to the proofs of Theorem \ref{THM - good set} and Theorem \ref{THM - bad set}.

\section{Proof of Theorem \ref{THM - good set}}

Given a set of direction $\Omega \subset \RR$, we denote $\BBB_\Omega$ the collection of all the rectangles in the plane with a slope in $\Omega$. The \textit{directional maximal operator} associated to $\Omega$ is denote $\MMM^*_\Omega$ and defined by
\begin{align}\label{EQ - Définition de maximal directional operator}
	\MMM^*_\Omega f(x) := \sup_{x \in R \in \BBB_\Omega} \frac{1}{|R|} \int_R |f|
\end{align}
for $f \in L^1_\mathrm{loc}(\RR^2)$. If $\Omega$ is a lacunary sequence converging to $0$, we know that $\MMM_\Omega$ is bounded on $L^p(\RR^2)$ for $1 < p \leq \infty$. This result is due to Strömberg and C\'ordoba and Fefferman (for $p > 2$), and Nagel, Stein, and Wainger extended this result one year later to any $1 < p \leq \infty$ (see \cite{Stromberg1976}, \cite{MR476977} and \cite{0b992c85-8c1b-372f-a8bb-0efd679eb1e5}). Then, it was shown by Sjögren and Sjölin in \cite{SjogrenSjolin1981} that the results holds if $\Omega$ is finitely lacunary. Our strategy here is to compare the directional maximal operator $\MMM_\Omega^*$ and its discrete version, defined by
	\[ A^*_\Omega \varphi (k,l) := \sup_{\substack{R \in \BBB_\Omega \\ \# (R \cap \ZZ^2) \geq 1}} \frac{1}{\# (R \cap \ZZ^2)} \sum_{(i,j) \in R} |\varphi (i+k,j+l)| \]
for $\varphi \in l^\infty(\ZZ^2)$ and $(k,l) \in \ZZ^2$. As noted in section \ref{SUBSECTION - Comparison between volume and cardinality}, such a comparison only makes sense when the rectangles are large enough. This is not a problem, since we can always focus on the tail of the process. It's the point of the next lemma. The proof is standard and is left to the reader.

	\begin{lem}\label{LEM - truncated max op bounded iff max op is bounded}
		Let $N \in \NN$. Define the $N$-truncated discrete maximal operator by
		\[ A^*_N\varphi := \sup_{n \geq N} A_n |\varphi| \]
		for $\varphi \in l^\infty(\ZZ^2)$. Then if $1 < p < \infty$, $A_N^*$ is of weak-type $(p,p)$ if and only if the maximal operator $A^*$ is of weak-type $(p,p)$.
	\end{lem}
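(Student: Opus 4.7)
The forward implication is immediate: since $A_N^*\varphi \leq A^*\varphi$ pointwise on $\ZZ^2$, any weak-type $(p,p)$ bound for $A^*$ passes to $A_N^*$ without modification.

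For the converse, write
\[ A^*\varphi = \max\left(\sup_{n\geq N} A_n|\varphi|,\ \max_{0\leq n<N} A_n|\varphi|\right) \leq A_N^*\varphi + \max_{0\leq n<N} A_n|\varphi|. \]
The first summand is weak $(p,p)$ by hypothesis. The plan for the second is to observe that each individual $A_n$ is in fact \emph{strong} type $(p,p)$ with norm $\leq 1$: indeed, $A_n|\varphi|$ is the convolution of $|\varphi|$ with the probability measure $\psi_n := \frac{1}{\#(R_n\cap\ZZ^2)}\chi_{R_n\cap\ZZ^2}$ on $\ZZ^2$, so Young's (or simply Jensen's) inequality gives $\|A_n|\varphi|\|_{l^p(\ZZ^2)} \leq \|\varphi\|_{l^p(\ZZ^2)}$. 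In particular each $A_n$ is of weak-type $(p,p)$ with constant $1$.

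Since a finite maximum of weak-type $(p,p)$ operators remains weak-type, we have for every $\lambda>0$
\[ \#\left\{(k,l)\in\ZZ^2 : \max_{0\leq n<N} A_n|\varphi|(k,l) > \lambda\right\} \leq \sum_{n=0}^{N-1}\#\{A_n|\varphi| > \lambda\} \leq N\left(\frac{\|\varphi\|_{l^p(\ZZ^2)}}{\lambda}\right)^p. \]
Combining the two estimates via the standard splitting
\[ \{A^*\varphi > \lambda\} \subset \left\{A_N^*\varphi > \tfrac{\lambda}{2}\right\} \cup \left\{\max_{0\leq n<N} A_n|\varphi| > \tfrac{\lambda}{2}\right\} \]
then yields a weak-type $(p,p)$ bound for $A^*$ with constant depending on $N$, the weak-type constant of $A_N^*$, and $p$.

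There is no real obstacle here; the only thing to be a bit careful about is justifying that the individual operators $A_n$ are bounded on $l^p$, which is why the averages are taken to be probability averages (the normalisation by $\#(R_n\cap\ZZ^2)$, which is finite and positive since $R_n\ni(0,0)$, is what makes the convolution kernels $\psi_n$ probability measures).
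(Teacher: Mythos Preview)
Your proof is correct and is precisely the standard argument the paper has in mind; the paper itself omits the proof entirely, writing only that it ``is standard and is left to the reader.'' Your approach---bounding each $A_n$ as a convolution with a probability measure to get strong $(p,p)$ control on the finitely many discarded terms, then combining with the assumed weak-type bound for $A_N^*$ via the usual $\lambda/2$ splitting---is exactly what one expects here.
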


We can now move to the proof of Theorem \ref{THM - good set} whose statement is recalled below. Definition of process is given on page \pageref{PAGE - definitions process}.

\begin{thm*}
	Let $\Omega \subset (0,1)$ be a lacunary sequence converging to $0$ and let $\{R_n\}_{n \in \NN}$ be a process such that for any $n \in \NN$, the slope of $R_n$ is in $\Omega$. Then, the maximal operator $M^*$ defined as:
	\[ M^*f = \sup_{n \in \NN} M_n |f| = \sup_{n \in \NN} \frac{1}{\# (\R_n \cap \ZZ^2)} \sum_{(i,j) \in R_n} |f(S^iT^j)|, \]
	is of weak-type $(p,p)$ for any $ 1 < p < \infty$.
\end{thm*}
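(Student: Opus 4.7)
The plan is to transfer the problem twice: first from the ergodic averages $M^*$ to the discrete averages $A^*$ on $\ZZ^2$, and then from $A^*$ to the continuous directional maximal operator $\MMM^*_\Omega$ acting on functions on $\RR^2$, where we can invoke the $L^p$-boundedness result of C\'ordoba--Fefferman and Nagel--Stein--Wainger. The first reduction is exactly Lemma~\ref{LEM - Transfert lemma 1<p<infty}, so it suffices to show that $A^*$ is of weak-type $(p,p)$ on $l^p(\ZZ^2)$; by Lemma~\ref{LEM - truncated max op bounded iff max op is bounded}, we may further restrict attention to the $N$-truncated version $A^*_N$ for some large $N$, which, combined with Lemma~\ref{LEM - Estimation volume - cardinal pour les process}, allows us to replace $\# (R_n \cap \ZZ^2)$ by $|R_n|$ up to bounded factors throughout.

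Given $\varphi \in l^p(\ZZ^2)$, I would lift it to the step function
\[ \Phi(x,y) := \sum_{(i,j) \in \ZZ^2} \varphi(i,j)\, \chi_{Q((i,j),1)}(x,y), \]
whose $L^p(\RR^2)$-norm equals $\|\varphi\|_{l^p(\ZZ^2)}$. For each $n$ denote by $R_n^+$ and $R_n^{++}$ the rectangles with the same centre and orientation as $R_n$, obtained by extending each side by $\sqrt{2}$, respectively $2\sqrt{2}$, at both ends; each shares the slope of $R_n$ and hence lies in $\BBB_\Omega$. The key pointwise estimate to establish is that, for $N$ large enough and for all $n \geq N$, $(k,l) \in \ZZ^2$ and $(x,y) \in Q((k,l),1)$,
\[ A_n \varphi(k,l) \leq C\, \MMM^*_\Omega \Phi(x,y) \]
for some absolute constant $C$. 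The inclusion $\bigcup_{(i,j) \in R_n \cap \ZZ^2} Q((i,j),1) \subset R_n^+$ lets us bound the sum defining $A_n\varphi(k,l)$ by $\int_{R_n^+ + (k,l)} |\Phi|$, and an elementary geometric check shows $R_n^+ + (k,l) \subset R_n^{++} + (x,y)$ whenever $(x,y) \in Q((k,l),1)$; the rectangle $R_n^{++} + (x,y)$ then contains $(x,y)$ and belongs to $\BBB_\Omega$. One concludes using Lemma~\ref{LEM - Estimation volume - cardinal pour les process} together with the fact that $|R_n^{++}|/|R_n| \to 1$.

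Integrating this pointwise bound over the disjoint unit squares $\{Q((k,l),1)\}_{(k,l) \in \ZZ^2}$ then yields
\[ \#\{(k,l) \in \ZZ^2 : A^*_N \varphi(k,l) > \lambda\} \leq \bigl| \{(x,y) \in \RR^2 : \MMM^*_\Omega \Phi(x,y) > \lambda/C\} \bigr|, \]
and the lacunary directional maximal theorem bounds the right-hand side by a constant multiple of $(\|\Phi\|_{L^p}/\lambda)^p = (\|\varphi\|_{l^p(\ZZ^2)}/\lambda)^p$ for every $1 < p < \infty$. This establishes the weak-type $(p,p)$ bound for $A^*_N$, hence for $A^*$, and thus for $M^*$.

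The main, and really only, point requiring care is the pointwise comparison above: one must check that two successive $\sqrt{2}$-enlargements are enough to absorb both the lattice spacing, when replacing sums by integrals of $\Phi$, and the shift by the vector $(x,y) - (k,l) \in Q((0,0),1)$, when passing from a rectangle centred at $(k,l)$ to a rectangle containing $(x,y)$. Since these enlargements preserve the orientation of $R_n$, the resulting rectangles remain in $\BBB_\Omega$, which is precisely what allows the lacunary directional maximal theorem to close the argument.
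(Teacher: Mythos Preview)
Your proposal is correct and follows essentially the same route as the paper: reduce to $A^*$ via Lemma~\ref{LEM - Transfert lemma 1<p<infty}, truncate via Lemma~\ref{LEM - truncated max op bounded iff max op is bounded}, lift $\varphi$ to a step function on $\RR^2$, and compare the discrete average over $R_n$ to a continuous average over a slightly enlarged rectangle with the same slope so as to invoke the lacunary bound for $\MMM^*_\Omega$. The only cosmetic difference is that the paper performs the enlargement in two separate steps (first $\widetilde{R}_n \subset P_n$ to pass from sums to integrals, then a second enlargement $R \mapsto R'$ to move from the lattice point $(k,l)$ to an arbitrary $(x,y)$) and records a strong $l^p$ bound, whereas you fold both enlargements into the chain $R_n \subset R_n^+ \subset R_n^{++}$ and go straight to the weak-type inequality.
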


\begin{proof}[Proof of Theorem \ref{THM - good set}.]
We fix a function $\varphi \in l^p(\ZZ^2)$, $1 < p < \infty$. We define a function $f_\varphi : \RR^2 \to \RR$ by
\[ \begin{tabular}{|cccl}
	$f_\varphi : $ & $\RR^2$ & $\lms$ & $\RR$ \\
	& $(x,y)$ & $\lms$ & $\varphi(\lfloor x \rfloor, \lfloor y \rfloor)$. 
\end{tabular} \]
The function $f_\varphi$ lies in $L^p(\RR^2)$ and $\| f_\varphi \|_{L^p(\RR^2)} = \| \varphi \|_{l^p(\ZZ^2)}$.
Our goal is to compare the two maximal functions $A^*_\Omega\varphi$ and $\MMM^*_\Omega f_\varphi$. Let $n \in \NN$ and $(k,l) \in \ZZ^2$. Then
\begin{align*}
\frac{1}{\# (R_n \cap \ZZ^2)} \sum_{(i,j) \in R_n +(k,l)} |\varphi (i,j)| &= \frac{1}{\# (R_n \cap \ZZ^2)} \sum_{(i,j) \in R_n +(k,l)} \int_{[k,k+1] \times [l,l+1]} |f_\varphi ( x  ,  y )| \d x \d y \\
&= \frac{1}{\# (R_n \cap \ZZ^2)} \int_{\widetilde{R}_n+(k,l)} |f_\varphi (x,y)| \d x \d y,
\end{align*}
where $\widetilde{R}_n := \{ (x,y) \in \RR^2 : (\lfloor x \rfloor , \lfloor y \rfloor) \in R_n \}$. Let $N \in \NN$ be such that for all $n \geq N$, one has
\[ l(R_n) \geq 1 \quad \text{and} \quad \# (R_n \cap \ZZ^2) \geq \frac{|R_n|}{2},\]
(such a $N$ exists by Lemma \ref{LEM - Estimation volume - cardinal pour les process}). We then assume that $n \geq N$. It's clear that there exists another rectangle $P_n$ oriented as $R_n$ such that $\widetilde{R}_n \subset P_n$ and such that $|P_n| \leq \alpha |R_n|$, with $\alpha > 1$ a constant independant on $n$, see Figure \ref{FIG - AOmega et MOmega}.\footnote{It suffices to take the rectangle with the same center and orientation, and enlarge it so that it extends $\sqrt{2}$ beyond its original size on all sides. Then, $\alpha=9+4 \sqrt{2}$ is suitable.}

\begin{figure}
	\centering
	\includegraphics[width=0.3\textwidth]{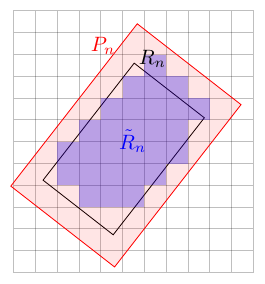}
	\caption{The set $\widetilde{R}_n$ and the rectangle $P_n$.}
	\label{FIG - AOmega et MOmega}
\end{figure}

Hence, we get:
\begin{align*}
	\frac{1}{\# (R_n \cap \ZZ^2)} \sum_{(i,j) \in R_n +(k,l)} |\varphi (i,j)| &\leq \frac{2}{|R_n|} \int_{P_n+(k,l)} |f_\varphi (x,y) \d x \d y \\
	&= \frac{2 \alpha}{|P_n|} \int_{P_n+(k,l)} |f_\varphi (x,y)| \d x \d y.
\end{align*}
By taking the supremum over $n \geq N$, we obtain
\[ A^*_N \varphi(k,l) \lesssim \MMM_\Omega^*f_\varphi (k,l), \]
where $A^*_N$ is the truncated maximal operator introduced in Lemma \ref{LEM - truncated max op bounded iff max op is bounded}. Then we have
\[ \| A_N^* \varphi \|_{l^p(\ZZ^2)}^p \lesssim \sum_{(k,l) \in \ZZ^2} |\MMM^*_\Omega f_\varphi(k,l)|^p = \int_{\RR^2} |\MMM^*_\Omega f_\varphi (\lfloor x \rfloor, \lfloor y \rfloor )|^p \d x \d y. \]
Let $(x,y) \in \RR^2$, and let $R \in \BBB_\Omega$ be a rectangle containing the point $(\lfloor x \rfloor , \lfloor y \rfloor)$. It's easy to see that there exists another rectangle $R' \in \BBB_\Omega$ containing $x$, call it $R$, such that $|\widetilde{R}| \lesssim |R|$. It comes that
\[ \frac{1}{|R|} \int_R |f_\varphi| \lesssim \frac{1}{|\widetilde{R}|} \int_{\widetilde{R}} |f_\varphi| \leq \MMM^*_\Omega f_\varphi(x,y) \]
and so $\MMM^*_\Omega f_\varphi (\lfloor x \rfloor , \lfloor y \rfloor) \lesssim \MMM^*_\Omega f_\varphi (x,y)$. Since $\MMM^*_\Omega$ is bounded on $L^p(\RR^2)$, we get
\[ \| A_N^* \varphi \|_{l^p(\ZZ^2)}^p \lesssim \int_{\RR^2} |\MMM_\Omega^*f_\varphi (x,y)|^p \d x \d y \lesssim \| f_\varphi \|_{L^p(\RR^2)}^p = \| \varphi \|_{l^p(\ZZ^2)}^p.\]
The operator $A_N^*$ is then bounded in $l^p(\ZZ^2)$, and so is $A^*$ by Lemma \ref{LEM - truncated max op bounded iff max op is bounded}. Using the transfer Lemma \ref{LEM - Transfert lemma 1<p<infty}, the proof is complete.
%
%
%
%
\end{proof}

\section{Proof of Theorem \ref{THM - bad set}}

As a first step, we exposed two geometric constructions presented in \cite{MR4603294} and we adapt it to $\ZZ^2$.

\subsection{Geometric construction 1}\label{SUBSECTION : Geometric construction 1}

In this section, we use Lemma \ref{LEM - Estimation volume - cardinal} to adapt the geometric construction presented in section $6$ of \cite{MR4603294}. We present here a discrete version of \cite[Lemma 6.1]{MR4603294}. For the sake of completeness, let us recall the geometric setup. We define the points $A=(1,0)$, $B=(0,b)$ and $C=(0,c) $ where $0 \leq c < b$, we denote $\Delta$ the full triangle $ABC$. Let $B',C'$ be the points defined by
\[ \overrightarrow{AB'} = \frac{3}{2} \overrightarrow{AB}, \quad \overrightarrow{AC'} = \frac{3}{2} \overrightarrow{AC}.\]
Let $\widetilde{P}$ be the smallest rectangle having $(AC')$ as one of its sides and containing the triangle $AB'C'$, and let $P$ be the rectangle obtained by translating $\widetilde{P}$ such that its center is the origin. Finally, we denote $V$ the full trapezium $BCC'B'$. See Figure \ref{FIGURE - The triangle Delta and the rectangle P}.

\begin{figure}
	\centering
	\includegraphics[width=0.8\textwidth]{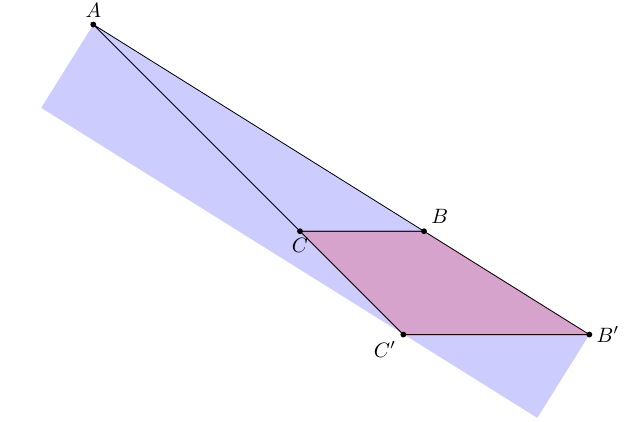}
	\caption{The triangle $\Delta$ and the rectangle $\widetilde{P}$.}
	\label{FIGURE - The triangle Delta and the rectangle P}
\end{figure}

In \cite[Lemma 6.1]{MR4603294}, the authors used elementary geometric arguments to prove the following.

\begin{lem}
	If $\alpha > 0$ is such that $AB \geq \alpha BC$, then for any $x \in V$ one has
	\[ |(x+P) \cap \Delta| \geq \frac{\min (\alpha,1)}{72}|P|. \]
\end{lem}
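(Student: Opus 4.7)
The plan is to construct, for each $x\in V$, an explicit sub-rectangle $S_x\subset(x+P)\cap\Delta$ whose area is at least $\frac{\min(\alpha,1)}{72}|P|$. I would work in rotated coordinates in which the direction $\overrightarrow{AC}$ is horizontal and $A$ sits at the origin, so that $\Delta$ becomes a triangle with base $AC$ on the horizontal axis and apex $B$ at a height $h_B>0$; the triangle $AB'C'$ is then the scale-$3/2$ dilation of $\Delta$ from $A$, and $P$ is the minimal enclosing rectangle of $AB'C'$ with horizontal long side, translated to be centred at the origin. Explicit formulas give the length $L$ and width $d$ of $P$ in terms of $|AC|$, the axial coordinate $s_B$ of $B$ and $h_B$, with $|P|=Ld$. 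The trapezium $V$ becomes a horizontal slab between heights $0$ and $h_B$, bounded axially by the segment $BC$ and its dilate $B'C'$.

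The second step is an axial (``chord'') bound. For $x=(\alpha_x,h_x)\in V$, the rectangle $x+P$ covers a horizontal range $[\alpha_x-L/2,\alpha_x+L/2]$ centred at $x$. By comparing this range with the horizontal cross-section of $\Delta$ at a suitable height near $h_x$ (the cross-section of $\Delta$ at height $h\in[0,h_B]$ has length $|AC|(1-h/h_B)$), I would show that $(x+P)\cap\Delta$ contains a horizontal segment of length at least $L/c_1$ for some universal constant $c_1$. The key geometric input is that $x\in V$ forces $\alpha_x$ to lie inside the axial range of $\Delta$, while the extra factor $3/2$ built into $L$ provides enough slack to cover a fixed fraction of the cross-section.

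The third step is a perpendicular thickness bound, and it is here that the hypothesis $|AB|\geq\alpha|BC|$ enters. It bounds from below the opening angle of $\Delta$ at $A$ in terms of $\alpha$, which in turn controls how much perpendicular room $\Delta$ offers above a given horizontal sub-interval. Quantitatively, I would exhibit a horizontal sub-segment of length at least $L/c_3$ along which $\Delta$ provides perpendicular room at least $\frac{\min(\alpha,1)}{c_2}d$, giving a rectangle $S_x\subset (x+P)\cap\Delta$ of area at least $\frac{\min(\alpha,1)}{c_1c_2c_3}Ld=\frac{\min(\alpha,1)}{c_1c_2c_3}|P|$. One then verifies that the constants can be arranged so that $c_1c_2c_3\leq 72$.

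The main obstacle I anticipate is the case analysis at extreme positions of $x$ in $V$, particularly when $x$ is close to a corner such as $B'$: in that regime both the axial chord length inside $\Delta$ and the available perpendicular room inside $\Delta$ shrink simultaneously, and their product must still remain a fixed fraction of $|P|$. The factor $\min(\alpha,1)$ is precisely what is lost when the triangle is thin relative to its base, and carefully tracking the dependence on $\alpha$ through these corner cases is what produces the universal constant $72$.
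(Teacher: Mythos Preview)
The paper does not give its own proof of this lemma: it is quoted from \cite[Lemma~6.1]{MR4603294}, with the sole comment that ``the authors used elementary geometric arguments to prove the following.'' In the present paper the lemma is a black box imported from D'Aniello--Gauvan--Moonens--Rosenblatt, so there is no argument here to compare yours against.

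Your outline is very much in the spirit of what an ``elementary geometric'' proof should look like: rotate so that the side of $\widetilde P$ lying on the line $AC'$ becomes horizontal, and for each $x\in V$ locate an explicit region inside $(x+P)\cap\Delta$ of area at least $\frac{\min(\alpha,1)}{72}|P|$. One geometric slip to correct before you execute the plan: in these rotated coordinates the trapezium $V=BCC'B'$ is \emph{not} a slab between heights $0$ and $h_B$. Since $B'$ is the image of $B$ under the dilation of ratio $\tfrac32$ centred at $A$, it sits at height $\tfrac32 h_B$, strictly above the apex of $\Delta$. This matters exactly in the corner case you already flag: for $x$ near $B'$ the centre of $x+P$ lies well outside $\Delta$, and $(x+P)\cap\Delta$ is a small triangular sliver near the apex $B$, not a rectangle. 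A back-of-the-envelope computation shows that in the extremal position $x=B'$ the intersection has area on the order of $\tfrac{1}{16}|\Delta|\approx\tfrac{1}{72}|P|$, so the constant $72$ is essentially tight there and the case analysis needs to be done with the correct height range $[0,\tfrac32 h_B]$.
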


By Lemma \ref{LEM - Estimation volume - cardinal}, we directly have a discrete version of this estimate.

\begin{lem}\label{LEM - translation trapèze dans Z2}
There exists a scale $\delta = \delta_\Delta > 0$ depending on the triangle $\Delta$ such that for any $x \in V$ one has
	\[ \# \left ( \left ( (x+P) \cap \Delta \right ) \cap \ZZ^2\right ) \gtrsim \# \left ( \delta P \cap \ZZ^2 \right ). \]
\end{lem}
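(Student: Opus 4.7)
The plan is to combine the continuous estimate \cite[Lemma 6.1]{MR4603294} recalled just above with Lemma~\ref{LEM - Estimation volume - cardinal}, which converts Lebesgue measures into lattice-point counts once the scale is large enough. Throughout, I write $E_x := (x+P)\cap\Delta$ for $x \in V$; this is a compact convex set with piecewise smooth boundary, so Lemma~\ref{LEM - Estimation volume - cardinal} is indeed applicable to it.

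First, I would invoke the continuous lemma to get
\[ |E_x| \,\geq\, \tfrac{\min(\alpha,1)}{72}\,|P| \]
uniformly in $x \in V$, where $\alpha > 0$ is the fixed constant satisfying $AB \geq \alpha BC$ (determined by $\Delta$). Next, after dilating the whole configuration by a parameter $\delta > 0$, I would apply Lemma~\ref{LEM - Estimation volume - cardinal} separately to $E_x$ and to $P$ to obtain, for $\delta$ large enough,
\[ \#(\delta E_x \cap \ZZ^2) \,\geq\, \tfrac{1}{2}\,|\delta E_x| \,=\, \tfrac{\delta^2}{2}\,|E_x| \qquad \text{and} \qquad \#(\delta P \cap \ZZ^2) \,\leq\, \tfrac{3}{2}\,|\delta P| \,=\, \tfrac{3\delta^2}{2}\,|P|. \]
Chaining these three inequalities yields $\#(\delta E_x \cap \ZZ^2) \gtrsim \#(\delta P \cap \ZZ^2)$ with an implicit constant $\frac{\min(\alpha,1)}{216}$ depending only on $\Delta$, which is exactly the desired discrete bound (modulo the implicit dilation by $\delta$ in the notation).

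The main obstacle I anticipate is obtaining a single threshold $\delta_\Delta$ that works for every $x \in V$ simultaneously, rather than one $\delta_0(x)$ per point. Inspecting the proof of Lemma~\ref{LEM - Estimation volume - cardinal}, the threshold $\delta_0$ for a set $E$ is controlled entirely by the error $|\delta E^\pm| = |\delta E| \pm O(\delta)$, whose hidden constant depends only on the length of $\partial E$. Since $E_x \subset \Delta$ is contained in a bounded set, and the map $x \mapsto E_x$ is continuous for the Hausdorff distance on the compact $V$, the perimeters $\operatorname{length}(\partial E_x)$ are uniformly bounded for $x \in V$; combined with the uniform lower bound on $|E_x|$ provided by the continuous lemma, this shows that the ratio $\#(\delta E_x \cap \ZZ^2)/|\delta E_x|$ converges to $1$ uniformly in $x$. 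A common $\delta_\Delta$ therefore suffices, and this uniformity is really the only point to check carefully; the remainder of the argument is a direct chaining of the two cited lemmas.
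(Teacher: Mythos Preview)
Your proposal is correct and follows exactly the route the paper takes: the paper's ``proof'' is the single sentence ``By Lemma~\ref{LEM - Estimation volume - cardinal}, we directly have a discrete version of this estimate,'' and you have simply unpacked that sentence. Your treatment of the uniformity of the threshold $\delta$ over $x \in V$ (via the uniform perimeter bound and the uniform lower bound on $|E_x|$) is a point the paper does not make explicit, so if anything your argument is more complete.
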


\subsection{Geometric construction 2}

In \cite{MR1650945}, Hare and Rönning introduced some generalized Perron trees constructions (we refer to \cite{Guzman-Differentiation-of-Integrals} for the basic construction). Given a set of directions $\Omega$, they gave necessary conditions on $\Omega$ ensuring that the maximal directional operator associated to $\Omega$ (see (\ref{EQ - Définition de maximal directional operator}) for the definition) is bounded on $L^p(\RR^2)$, for $1 < p \leq \infty$. The two authors introduced a quantity associated to $\Omega$, its \textit{Perron factor} $PF(\Omega)$. If $\Omega = \{u_n^{-1}\}_{n \in \NN}$ for an increasing sequence of slopes $\{u_n\}_{n \in \NN}$, then the \textit{Perron factor} of $\Omega$ is defined as 
\[ PF(\Omega) := \sup_{n\in\NN}\sup_{1\leq l\leq n} \left(\frac{u_{n+2l}-u_{n+l}}{u_{n+l}-u_n}+\frac{u_{n+l}-u_n}{u_{n+2l}-u_{n+l}}\right).\]
Hare and Rönning proved that if $\Omega = \{u_k^{-1}\}_{k \in \NN}$ for an increasing sequence of slopes $\{u_k\}_{k \in \NN}$ and if $PF(\Omega) < \infty$, then the operator $\MMM^*_\Omega$ is unbounded on $L^p(\RR^2)$, for all $1 < p \leq \infty$. Recently, D’Aniello, Gauvan, Moonens, and Rosenblatt adapted this construction and proved the following (\cite[Lemma 5.1]{MR4603294}).

\begin{lem}
	Assume that $\Omega:=\{u_k^{-1}\}_{k \in \NN^*}$, where $\{u_k\}_{k \in \NN^*}$ is an increasing sequence of nonnegative real numbers satisfying:
	\begin{equation}
		PF(\Omega) < \infty.
	\end{equation}
	Let $u_0=0$. Denote, for $k \in \NN^*$, by $\Delta_k$ the triangle in the plane having vertices $A:=(0,1)$, $B_{k-1}:=(u_{k-1}, 0)$ and $B_k:=(u_k,0)$. Under those assumptions, there exists a sequence $\{\varepsilon_n\}_{n \in \NN}$ of positive real numbers tending to $0$ and horizontal translations $\tau_k$, $k \in \NN^*$, in such a way that, for any $n \in \NN$, one has:
   \[ \left| \bigcup_{k=2^n}^{2^{n+1}-1} \tau_k \Delta_k\right|\leq \varepsilon_n \left| \bigcup_{k=2^n}^{2^{n+1}-1} \Delta_k\right|.\]
\end{lem}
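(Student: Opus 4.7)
The plan is to implement a generalized Perron-tree construction in the style of Hare and Rönning~\cite{MR1650945}, adapted as in~\cite{MR4603294}. We will iteratively pair the triangles $\Delta_k$ and, at each level of the pairing, find horizontal translations that force each pair to overlap by a definite fraction of its area, with the reduction factor controlled by $PF(\Omega)$.

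The base case is a pairwise ``sprouting'' estimate. Given two triangles $\Delta^{(1)}, \Delta^{(2)}$ with common apex $A = (0,1)$ and bases $[\alpha,\beta]$ and $[\beta,\gamma]$ on the $x$-axis, set $L_1 := \beta - \alpha$ and $L_2 := \gamma - \beta$. Horizontally translate them by carefully chosen amounts so that they overlap significantly; an explicit area computation shows that the resulting union is contained in a ``sprout'' --- essentially a smaller triangle with apex near $A$ and base of length $\sim L_1 + L_2$, plus two small horns --- whose area is at most $\eta\,\bigl(|\Delta^{(1)}|+|\Delta^{(2)}|\bigr)$ for some $\eta = \eta(PF(\Omega)) \in (0,1)$. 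The Perron factor hypothesis is exactly what bounds $\max(L_1/L_2, L_2/L_1)$ uniformly and thereby keeps $\eta$ away from $1$.

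The iteration is then straightforward in spirit: starting from the $2^n$ triangles $\Delta_{2^n},\dots,\Delta_{2^{n+1}-1}$, group them into $2^{n-1}$ adjacent pairs and apply the pairwise step (which simultaneously defines the horizontal translations $\tau_k$ on the individual $\Delta_k$); then group the resulting sprouts into $2^{n-2}$ pairs and repeat. After $n$ levels we obtain a single region of area at most $\eta^n\,\bigl|\bigcup_{k=2^n}^{2^{n+1}-1} \Delta_k\bigr|$, so setting $\varepsilon_n := \eta^n$ yields the required bound.

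I expect the main obstacle to be the iteration itself: after the first pairing, the objects at the next level are sprouts rather than triangles, so the pairwise estimate must be extended to such shapes. The standard way to handle this is to show that a sprout produced from a pair of bases of total length $L$ fits inside a triangle with the same apex $A$ and base of length $\lesssim L$, up to small horns whose total areas, summed over all levels, form a convergent geometric series. The \emph{full} Perron factor hypothesis ---\,uniformity over both $n$ and $1 \leq l \leq n$\,--- is exactly what is needed here: at generation $m$ one must apply the pairwise estimate to two shapes whose bases span $2^m$ consecutive intervals each, so the ratio to control is $(u_{n+2\cdot 2^m}-u_{n+2^m})/(u_{n+2^m}-u_n)$, and uniformity in $l$ is precisely what keeps the horn contributions in check at every generation.
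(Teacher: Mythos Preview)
The paper does not prove this lemma at all: it is quoted verbatim as \cite[Lemma~5.1]{MR4603294} and used as a black box, with the generalized Perron-tree machinery attributed to Hare and R\"onning~\cite{MR1650945}. So there is no ``paper's own proof'' to compare against; your proposal is supplying content the paper deliberately outsources.

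That said, your sketch is a faithful outline of the Hare--R\"onning construction as adapted in~\cite{MR4603294}: the pairwise overlap estimate controlled by the base-length ratio, the dyadic iteration over the block $\{\Delta_{2^n},\dots,\Delta_{2^{n+1}-1}\}$, and the observation that the uniformity of $PF(\Omega)$ over all gaps $1\leq l\leq n$ is exactly what lets the estimate survive from one generation of sprouts to the next. The one place where you are slightly imprecise is the treatment of the horns: in the standard construction the sprout from a pair is not literally contained in a single triangle of comparable base, so at generation $m$ you are overlaying objects that already carry horns from earlier generations, and one must check that the accumulated horn areas stay under control (this is where the geometric-series bookkeeping enters). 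You acknowledge this, but in a full proof it is the step that requires the most care; everything else is routine once the pairwise estimate is written down cleanly.
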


Once again, we just have to use Lemma \ref{LEM - Estimation volume - cardinal} to obtain a discrete version of this result in $\ZZ^2$.

\begin{lem}[Dyadic blocks in discrete generalized Perron trees]\label{LEM - Perron tree dans Z2}
	Assume that $\Omega:=\{u_k^{-1}\}_{k \in \NN^*}$, where $\{u_k\}_{k \in \NN^*}$ is an increasing sequence of nonnegative real numbers satisfying:
	\begin{equation}
		PF(\Omega) < \infty.
	\end{equation}
	Let $u_0=0$. Denote, for $k \in \NN^*$, by $\Delta_k$ the triangle in the plane having vertices $A:=(0,1)$, $B_{k-1}:=(u_{k-1}, 0)$ and $B_k:=(u_k,0)$. Under those assumptions, there exists a sequence $\{\varepsilon_n\}_{n \in \NN}$ of positive real numbers tending to $0$, horizontal translations $\tau_k$, $k \in \NN^*$, and a sequence of scales $\{\delta_n\}_{n \in \NN} \subset (0,\infty)^\NN$ in such a way that, for any $n \in \NN$, one has:
	\[ \# \left( \left ( \delta_n \bigcup_{k=2^n}^{2^{n+1}-1} \tau_k \Delta_k \right ) \cap \ZZ^2\right) \lesssim \varepsilon_n \# \left( \left ( \delta_n \bigcup_{k=2^n}^{2^{n+1}-1} \Delta_k \right ) \cap \ZZ^2 \right).\]
\end{lem}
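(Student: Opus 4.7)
The plan is to combine the continuous estimate just recalled (namely $|\bigcup_k \tau_k \Delta_k| \leq \varepsilon_n |\bigcup_k \Delta_k|$) with Lemma~\ref{LEM - Estimation volume - cardinal}, applied at sufficiently large scales $\delta_n$, in order to transfer the volume inequality into a lattice-point inequality. For brevity write $U_n := \bigcup_{k=2^n}^{2^{n+1}-1} \tau_k \Delta_k$ and $\widetilde U_n := \bigcup_{k=2^n}^{2^{n+1}-1} \Delta_k$ throughout.

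First I would observe that $\widetilde U_n$ is in fact the single triangle with vertices $A=(0,1)$, $(u_{2^n-1},0)$ and $(u_{2^{n+1}-1},0)$: the triangles $\Delta_k$ share the apex $A$ and their bases tile the segment $[u_{2^n-1}, u_{2^{n+1}-1}]\times\{0\}$. Thus $\widetilde U_n$ is compact convex with piecewise smooth boundary, and Lemma~\ref{LEM - Estimation volume - cardinal} applies to it directly. The set $U_n$, however, is generally not convex. The key observation here is that the proof of Lemma~\ref{LEM - Estimation volume - cardinal} does not genuinely use convexity: the nested inclusions $\delta E^{-} \subset \bigcup_{x \in \delta E \cap \ZZ^2} Q(x,1) \subset \delta E^{+}$ and the bound $|\delta E^{\pm}| = |\delta E| \pm O(\delta)$ hold for any compact set with a piecewise smooth boundary of finite length (convexity was only invoked in the parenthetical connectedness remark, which becomes irrelevant once $\delta$ is large enough that $\delta E \cap \ZZ^2 \neq \emptyset$). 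Since $U_n$ is a finite union of triangles, it has polygonal boundary, and the conclusion of Lemma~\ref{LEM - Estimation volume - cardinal} still applies to it.

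With this extension in hand, the proof is essentially immediate. For each $n$ there are constants $C_n, C_n' > 0$ (depending on the perimeters of $U_n$ and $\widetilde U_n$) such that, for all $\delta$ large enough,
\[ \# (\delta U_n \cap \ZZ^2) \leq \delta^2 |U_n| + C_n \delta \quad \text{and} \quad \# (\delta \widetilde U_n \cap \ZZ^2) \geq \delta^2 |\widetilde U_n| - C_n' \delta. \]
I would then choose $\delta_n$ so large that $C_n \leq \delta_n |U_n|$ and $C_n' \leq \tfrac{1}{2} \delta_n |\widetilde U_n|$, which yields
\[ \# (\delta_n U_n \cap \ZZ^2) \leq 2 \delta_n^2 |U_n| \leq 2\varepsilon_n \delta_n^2 |\widetilde U_n| \leq 4 \varepsilon_n \,\# (\delta_n \widetilde U_n \cap \ZZ^2), \]
giving the desired estimate with absolute implicit constant $4$ (the $n$-dependence is absorbed into the choice of $\delta_n$).

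The main obstacle is precisely the extension of Lemma~\ref{LEM - Estimation volume - cardinal} to the possibly non-convex set $U_n$. Trying to bypass this by naive subadditivity, $\# (\delta_n U_n \cap \ZZ^2) \leq \sum_{k} \# (\delta_n \tau_k \Delta_k \cap \ZZ^2)$, would ignore the overlaps responsible for the factor $\varepsilon_n$ and produce a bound proportional to $\sum_k |\Delta_k| = |\widetilde U_n|$, losing exactly the savings we need. So a careful inspection of the proof of Lemma~\ref{LEM - Estimation volume - cardinal} to confirm that convexity is not actually used is the crux of the argument.
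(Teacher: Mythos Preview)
Your proof is correct and reaches the same conclusion as the paper, but via a slightly different handling of the non-convex set $U_n$. The paper's argument notes that $U_n = \bigcup_{k=2^n}^{2^{n+1}-1}\tau_k\Delta_k$ is not convex, decomposes it as a finite \emph{disjoint} union $U_n = \bigcup_{j\in J} E_j$ of compact convex sets with piecewise smooth boundary, applies Lemma~\ref{LEM - Estimation volume - cardinal} to each $E_j$ separately, and takes the largest of the resulting scales; summing over $j$ then recovers the cardinality--volume comparison for $U_n$ itself. You instead re-examine the proof of Lemma~\ref{LEM - Estimation volume - cardinal} and argue that convexity plays no essential role---the inclusions and the perimeter bound $|\delta E^{\pm}| = |\delta E| \pm O(\delta)$ hold for any compact set with polygonal (or more generally finite-length) boundary---so the lemma applies directly to $U_n$. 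Both routes are sound: the paper's decomposition keeps the statement of Lemma~\ref{LEM - Estimation volume - cardinal} untouched at the cost of an extra step, while your approach is more direct once the extension is justified, and makes the perimeter dependence of the $O(\delta)$ term explicit. Your final chain of inequalities, with the $n$-dependence absorbed into the choice of $\delta_n$ and an absolute implicit constant, is exactly what the paper obtains.
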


\begin{proof}
	We would like to apply Lemma \ref{LEM - Estimation volume - cardinal} directly, but the sets we consider are not all convex. While the set $\bigcup_{k=2^n}^{2^{n+1}-1} \Delta_k$ is convex, this is not the case for $\bigcup_{k=2^n}^{2^{n+1}-1} \tau_k \Delta_k$. Nevertheless, if $E = \bigcup_{k=2^n}^{2^{n+1}-1} \tau_k \Delta_k$, we can write
	\[ E = \bigcup_{j \in J} E_j \]
	where $J \subset \NN$ is finite, and the sets $E_j$ are disjoint sets satisfying the assumptions of Lemma \ref{LEM - Estimation volume - cardinal}. It then suffices to apply Lemma \ref{LEM - Estimation volume - cardinal} to the sets $E_j$, and to take the largest of the obtained scales.
\end{proof}

\subsection{Proof of Theorem \ref{THM - bad set}}

Let us recall Theorem \ref{THM - bad set}. The definition of a ($p$-bad) process is given on page \pageref{PAGE - definitions of good and bad process}.

\begin{thm*}
	Let $\Omega = \{ u_k^{-1}\}_{k \in \NN^*}$, where $\{u_k\}_{k \in \NN^*}$ is an increasing sequence of nonnegative real numbers that satisfies the following two conditions (we let $u_0:=0$):
\begin{itemize}
	\item[$(i)$] there exists a constant $c > 0$ such that for all $k \in \NN^*$, one has
	\begin{align*}
		1+u_{k-1}^2 \geq c(u_k-u_{k-1})^2 ;
	\end{align*}
	\item[$(ii)$] one has
	\begin{align*}
		PF(\Omega) < \infty.
	\end{align*}
\end{itemize}
Then, there exists a process $\{R_k\}_{k \in \NN}$ such that:
\begin{itemize}
	\item[$(i)$] for each $k \in \NN^*$, the slope of the rectangle $R_k$ is $\frac{1}{u_k}$;
	\item[$(ii)$] $\{R_n\}_{n \in \NN}$ is a $p$-bad process for all $1 \leq p < \infty$.
\end{itemize}
Moreover, any other process $\{\widetilde{R}_k\}_{k \in \NN}$ defined by $\widetilde{R}_k := \delta_k R_k$ with $\delta_k > 1$, is a $p$-bad process for all $1 \leq p < \infty$.
\end{thm*}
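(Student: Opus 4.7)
The plan is to invoke the transfer Lemma \ref{LEM - Transfert lemma Tauberian condition} and the Sawyer-Stein principle (Theorem \ref{THM - Sawyer Stein principle}) in order to reduce the task to showing that an appropriately chosen process $\{R_k\}$ makes the discrete maximal operator $A^*$ on $\ZZ^2$ fail the Tauberian condition. Once that is established, $A^*$ cannot be of weak-type $(p,p)$ for any $1\leq p<\infty$, yielding the claimed $p$-badness in the ergodic setting. The whole construction will be a discrete adaptation of the continuous one in \cite{MR4603294}, made possible by the two discrete lemmas of the preceding subsections.

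\textbf{Construction of the process.} For $k\in\NN^*$, let $\Delta_k$ be the triangle with vertices $(0,1)$, $(u_{k-1},0)$, $(u_k,0)$, and attach to it the rectangle $P_k$ and trapezium $V_k$ of the first geometric construction (Section \ref{SUBSECTION : Geometric construction 1}). The long side of $P_k$ is parallel to the segment from $(0,1)$ to $(u_k,0)$, so $P_k$ has slope $1/u_k$; hypothesis $(i)$ of the theorem, which reads $|AB_{k-1}|\geq\sqrt{c}\,|B_{k-1}B_k|$, is precisely the hypothesis ``$AB\geq\alpha BC$'' of the trapezium lemma, with $\alpha=\sqrt{c}$ uniform in $k$. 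Writing $n(k):=\lfloor\log_2 k\rfloor$, I apply Lemma \ref{LEM - Perron tree dans Z2} in each dyadic block $[2^n,2^{n+1})$ to obtain translations $\tau_k$, a scale $\delta_n>0$ (taken as large as needed), and $\varepsilon_n\to 0$. The process is then defined by $R_k := \delta_{n(k)}P_k$, which is centered at the origin, of slope $1/u_k$, and satisfies $l(R_k)\to\infty$ provided $\delta_n$ grows quickly enough; at the same time, one arranges $\delta_n$ to exceed the threshold $\delta_{\Delta_k}$ required by Lemma \ref{LEM - translation trapèze dans Z2} for every $k\in[2^n,2^{n+1})$.

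\textbf{Failure of the Tauberian condition.} For each $n\in\NN$, set $F_n:=\bigcup_{k=2^n}^{2^{n+1}-1}\Delta_k$ and
\[ E_n := \Bigl(\delta_n\bigcup_{k=2^n}^{2^{n+1}-1}\tau_k\Delta_k\Bigr)\cap\ZZ^2, \]
so that Lemma \ref{LEM - Perron tree dans Z2} provides $\#E_n\lesssim\varepsilon_n\,\#(\delta_n F_n\cap\ZZ^2)$. The discrete trapezium Lemma \ref{LEM - translation trapèze dans Z2} at scale $\delta_n$, applied to the translated triangle $\tau_k\Delta_k$, will then give, for each $x\in\delta_n\tau_k V_k\cap\ZZ^2$,
\[ \#\bigl((x+R_k)\cap E_n\bigr)\;\geq\;\#\bigl((x+\delta_n P_k)\cap\delta_n\tau_k\Delta_k\cap\ZZ^2\bigr)\;\gtrsim\;\#(R_k\cap\ZZ^2), \]
with an implicit constant independent of $k$, hence $A^*\chi_{E_n}(x)\gtrsim 1$. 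Because $V_k$ has area comparable to $\Delta_k$ and the translates $\tau_k V_k$ can be arranged with essentially disjoint interiors of total area $\gtrsim|F_n|$, Lemma \ref{LEM - Estimation volume - cardinal} produces
\[ \#G_n\;\gtrsim\;\#(\delta_n F_n\cap\ZZ^2)\;\gtrsim\;\varepsilon_n^{-1}\,\#E_n, \qquad G_n:=\bigcup_{k=2^n}^{2^{n+1}-1}\delta_n\tau_k V_k\cap\ZZ^2. \]
Letting $n\to\infty$, the unboundedness of $\#G_n/\#E_n$ contradicts any Tauberian inequality for $A^*$, and thus $\{R_k\}$ is $p$-bad for every $1\leq p<\infty$.

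\textbf{Dilated processes and main obstacle.} For the last statement, a sequence $\widetilde{R}_k=\delta_kR_k$ with $\delta_k>1$ can be handled by rerunning the construction with $\delta_n$ replaced by the block-wise constant $\delta_n\max_{k\in[2^n,2^{n+1})}\delta_k$ (refining the dyadic blocks if necessary to keep the maximum under control); the slopes and Perron factor are untouched, so the same obstruction goes through. I expect the principal obstacle to be the scale bookkeeping: verifying that a single $\delta_n$ produced by Lemma \ref{LEM - Perron tree dans Z2} can indeed be taken large enough for Lemma \ref{LEM - translation trapèze dans Z2} to apply with uniform constants across a whole dyadic block, and that the translates $\tau_k V_k$ genuinely exhaust a cardinality-comparable portion of $\delta_n F_n\cap\ZZ^2$, while $\{R_k\}$ remains a legitimate process with $l(R_k)\to\infty$.
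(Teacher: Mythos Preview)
Your proposal follows essentially the same route as the paper: the construction $R_k=\delta_{n(k)}P_k$, the failure of the discrete Tauberian condition via the Perron-tree sets $E_n$ and the trapezium regions $G_n$, and the passage to the ergodic setting via Lemma~\ref{LEM - Transfert lemma Tauberian condition} together with Sawyer--Stein all match Proposition~\ref{PROP - not of weak-type in Z2} and the concluding lines of Section~5. One point to correct: the translates $\tau_kV_k$ are \emph{not} essentially disjoint---the $\tau_k$ are dictated by the Perron construction of Lemma~\ref{LEM - Perron tree dans Z2}, whose purpose is precisely to force the $\tau_k\Delta_k$ (and hence the attached trapezia) to overlap heavily---so your area bookkeeping for $G_n$ does not go through as written; the paper instead observes geometrically that $V^n=\bigcup_k\tau_kV_k$ contains a $\tfrac13$-scale similar copy of the untranslated big triangle $F_n$, which gives $|V^n|\geq\tfrac19|F_n|$ directly and hence $\#G_n\gtrsim\#(\delta_nF_n\cap\ZZ^2)$ after Lemma~\ref{LEM - Estimation volume - cardinal}.
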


We start by proving the following proposition. Recall that we define the discrete maximal operator $A^*$ as:
\[ A^*\varphi = \sup_{n \in \NN^*} A_n|\varphi| \]
for $\varphi \in l^\infty(\ZZ^2)$ where
\[ A_n \varphi = \frac{1}{\# (R_n \cap \ZZ^2)} \sum_{(i,j) \in R_n} \varphi(\cdot +i,\cdot +j). \]

\begin{prop}\label{PROP - not of weak-type in Z2}
	Let $\Omega = \{ u_k^{-1}\}_{k \in \NN^*}$, where $\{u_k\}_{k \in \NN^*}$ is an increasing sequence of nonnegative real numbers that satisfies the conditions $(i)$ and $(ii)$ as in the above theorem.
	Then, there exists a collection of rectangles $\{R_k\}_{k \in \NN}$ with $\lim_{k \to \infty}l(R_k) = \infty$ such that:
	\begin{itemize}
		\item[$(i)$] for each $k \in \NN^*$, the slope of the rectangle $R_k$ is $\frac{1}{u_k}$;
		\item[$(ii)$] the maximal operator $A^*$ associated to $\{R_k\}_{k \in \NN}$ doesn't satisfy a Tauberian condition.
	\end{itemize}
\end{prop}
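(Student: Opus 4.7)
The plan is to transpose the continuous construction of \cite{MR4603294} to the lattice setting, using the two discrete geometric tools already established, Lemmas \ref{LEM - translation trapèze dans Z2} and \ref{LEM - Perron tree dans Z2}. The goal is to exhibit, for every $n \in \NN$, a finite set $F_n \subset \ZZ^2$ and a constant $\lambda_0 > 0$ (independent of $n$) such that
$$\#\bigl\{y \in \ZZ^2 : A^* \chi_{F_n}(y) \geq \lambda_0\bigr\} \gtrsim \varepsilon_n^{-1}\, \# F_n,$$
with $\varepsilon_n \to 0$; this will directly preclude any Tauberian inequality at the threshold $\lambda_0$ and therefore establish the proposition.

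I first feed the sequence $\{u_k\}$ into Lemma \ref{LEM - Perron tree dans Z2}, whose hypothesis is exactly condition $(ii)$: for each $n$ this supplies horizontal translations $\{\tau_k\}_{2^n \leq k < 2^{n+1}}$, a scale $\delta_n > 1$, and $\varepsilon_n \to 0$ with
$$\#\bigl(\delta_n \textstyle\bigcup_k \tau_k \Delta_k \cap \ZZ^2\bigr) \lesssim \varepsilon_n \,\#\bigl(\delta_n \textstyle\bigcup_k \Delta_k \cap \ZZ^2\bigr),$$
and I set $F_n := \delta_n \bigcup_k \tau_k \Delta_k \cap \ZZ^2$. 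To each $\Delta_k$ I then associate the rectangle $P_k$ and trapezium $V_k$ of Subsection \ref{SUBSECTION : Geometric construction 1}: by construction the longest side of $P_k$ is parallel to $AB_k$, hence $P_k$ has slope $1/u_k$. The process is then defined by $R_k := \delta_n P_k$ whenever $2^n \leq k < 2^{n+1}$; enlarging the $\delta_n$ if necessary ensures $l(R_k) \to \infty$, so $\{R_k\}$ is a legitimate process of slopes $\{1/u_k\}$.

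The heart of the argument is to show that $A^* \chi_{F_n}$ is uniformly large on a lattice set of cardinality comparable to $\#(\delta_n \bigcup_k \Delta_k \cap \ZZ^2)$. Hypothesis $(i)$ reads $|AB_k|^2 \geq c\,|B_{k-1}B_k|^2$, a uniform aspect-ratio lower bound on the $\Delta_k$; this is precisely what makes the constant in Lemma \ref{LEM - translation trapèze dans Z2} independent of $k$. Consequently, for every $k$ in the $n$-th dyadic block and every lattice point $y$ in the appropriately translated copy of $\delta_n \tau_k V_k$, the scaled version of Lemma \ref{LEM - translation trapèze dans Z2} yields $A_k \chi_{F_n}(y) \geq \gamma$ for some absolute $\gamma > 0$. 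Since the area of $V_k$ is comparable to that of $\Delta_k$, a final application of Lemma \ref{LEM - Estimation volume - cardinal}, after splitting the non-convex union $\bigcup_k \tau_k V_k$ into finitely many convex pieces as in the proof of Lemma \ref{LEM - Perron tree dans Z2}, produces
$$\#\bigl\{A^*\chi_{F_n} \geq \gamma\bigr\} \gtrsim \#\bigl(\delta_n \textstyle\bigcup_k \Delta_k \cap \ZZ^2\bigr) \gtrsim \varepsilon_n^{-1}\, \# F_n.$$

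The main obstacle is the uniformity of $\gamma$ across all $k$: the constant furnished by Lemma \ref{LEM - translation trapèze dans Z2} depends on the aspect ratio of the underlying triangle, and hypothesis $(i)$ is exactly what prevents this constant from degenerating as the $\Delta_k$ become increasingly elongated. A secondary technical point is that the scales $\delta_n$ must be chosen large enough to simultaneously accommodate the Perron-tree cardinality estimate, the volume-cardinality comparison on the non-convex trapezoidal unions, and the requirement $l(R_k) \to \infty$; all of these can be arranged by monotonically enlarging $\delta_n$.
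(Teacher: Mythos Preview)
Your approach is essentially the paper's: transfer the continuous construction of \cite{MR4603294} to $\ZZ^2$ via Lemmas \ref{LEM - translation trapèze dans Z2} and \ref{LEM - Perron tree dans Z2}, set $R_k=\delta_n P_k$ on dyadic blocks $2^n\leq k<2^{n+1}$, use hypothesis $(i)$ to make the constant from Lemma \ref{LEM - translation trapèze dans Z2} uniform in $k$, and compare the lattice points in the translated trapezia with those in the Perron tree.

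There is, however, one genuine gap in your justification of the key inequality
\[
\#\bigl\{A^*\chi_{F_n}\geq\gamma\bigr\}\;\gtrsim\;\#\Bigl(\delta_n\textstyle\bigcup_k\Delta_k\cap\ZZ^2\Bigr).
\]
After invoking Lemma \ref{LEM - translation trapèze dans Z2} you correctly obtain that the left-hand side dominates $\#\bigl(\delta_n\bigcup_k\tau_k V_k\cap\ZZ^2\bigr)$, and via Lemma \ref{LEM - Estimation volume - cardinal} this reduces to the area inequality $\bigl|\bigcup_k\tau_k V_k\bigr|\gtrsim\bigl|\bigcup_k\Delta_k\bigr|$. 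Your stated reason, ``the area of $V_k$ is comparable to that of $\Delta_k$'', only gives $\sum_k|V_k|\approx\sum_k|\Delta_k|=\bigl|\bigcup_k\Delta_k\bigr|$. That is \emph{not} what you need: the whole point of the Perron construction is that the horizontal translations $\tau_k$ force heavy overlap, and the trapezia $\tau_k V_k$ (being glued to the bases of the $\tau_k\Delta_k$) overlap as well, so $\bigl|\bigcup_k\tau_k V_k\bigr|$ may a priori be far smaller than $\sum_k|V_k|$. The paper closes this gap with a geometric observation borrowed from \cite{MR4603294}: the union $V^n=\bigcup_k\tau_k V_k$ in fact contains a similar copy of the \emph{untranslated} big triangle $\bigcup_k\Delta_k$ with ratio $\tfrac13$, whence $|V^n|\geq\tfrac19\bigl|\bigcup_k\Delta_k\bigr|$ directly. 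Once you insert this observation, your outline and the paper's proof coincide.
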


For the proof we of course follow and adapt the one exposed in \cite{MR4603294}.

\begin{proof}
We keep the notations introduced in the previous section. Given a triangle $\Delta_k$, we also denote respectively $P_k$ and $V_k$ the rectangle and the trapezium associated to $\Delta_k$ as in section \ref{SUBSECTION : Geometric construction 1}. We fix $n \in \NN^*$. Define the sets
\[ K^n := \bigcup_{k=2^n}^{2^{n+1}-1} \tau_k \Delta_k \quad \text{and} \quad V^n := \bigcup_{k=2^n}^{2^{n+1}-1} \tau_k V_k \]
Since for any $n \in \NN$, $V^n$ contains a similar copy of the triangle $\cup_{k=2^n}^{2^{n+1}-1}\Delta_k$ with ratio $\frac{1}{3}$, one has
\[ |V^n| \geq \frac{1}{9} \left | \bigcup_{k=2^n}^{2^{n+1}-1} \Delta_k \right | \]
so by Lemma \ref{LEM - Estimation volume - cardinal} and by Lemma \ref{LEM - Perron tree dans Z2} there exists a scale $ \delta^{(1)}_n $ such that
\[ \# \left ( \delta^{(1)}_n  V^n \cap \ZZ^2 \right )  \gtrsim \# \left ( \delta^{(1)}_n \bigcup_{k=2^n}^{2^{n+1}-1} \Delta_k \cap \ZZ^2 \right ) \]
and
\[ \frac{\# \left ( \delta_n^{(1)} \bigcup_{k=2^n}^{2^{n+1}-1} \Delta_k \cap \ZZ^2 \right )}{\# \left ( \delta_n^{(1)} \bigcup_{k=2^n}^{2^{n+1}-1} \tau_k \Delta_k \cap \ZZ^2 \right )} \gtrsim \frac{1}{\varepsilon_n}. \]
Let $ x \in V^n $. There exists $2^n \leq k \leq 2^{n+1}-1$ such that $ x \in \tau_k V_k $. For $k \geq 1$, note that by (\ref{EQ - hypothèse 1+b2 >= c(bk-bk-1)}) one has
\[ AB_{k-1} = \sqrt{1+u_{k-1}^2} \geq \sqrt{c}(u_k-u_{k-1}) = \sqrt{c}B_{k-1}B_k. \]
Hence, Lemma \ref{LEM - translation trapèze dans Z2} can be applied with triangles $\Delta_k$. Thus, there exists another scale $ \delta^{(2)}_n > 1$  such that
\[ \# \left( \left( \delta^{(2)}_n ((x+P_k) \cap \tau_k \Delta_k ) \right) \cap \ZZ^2 \right) \gtrsim \# (\delta^{(2)}_n P_k \cap \ZZ^2) \]
(taking the largest scale $\delta_{\Delta_k}$ associated to $\Delta_k$ as in Lemma \ref{LEM - translation trapèze dans Z2}, we can assume that $\delta_n^{(2)}$ depends only on $n$). Let $ t_0 > 0 $ be the implicit constant in the previous inequality, then
\begin{align*}
	\frac{\# \left( \left( \delta^{(2)}_n ((x+P_k) \cap K^n ) \right) \cap \ZZ^2 \right)}{\# (\delta^{(2)}_n P_k \cap \ZZ^2)}  \geq \frac{\# \left( \left( \delta^{(2)}_n ((x+P_k) \cap \tau_k \Delta_k ) \right) \cap \ZZ^2 \right)}{\# (\delta^{(2)}_n P_k \cap \ZZ^2)} \geq t_0.
\end{align*}
Let $\delta_n := \max \{ \delta_n^{(1)}, \delta_n^{(2)} \}$, and denote $\{A_k\}_{k \in \NN}$ the averaging operators associated to the sequence of rectangles $\{r_k P_k\}_{k \in \NN}$, where $r_k=\delta_n$ for $2^n \leq k \leq 2^{n+1}-1$. We can of course assume that $\delta_n$ tends to $\infty$ when $n \to \infty$. Then there exists another constant $t_0' > 0$ such that
\begin{align}\label{EQ - Proof of the theorem - 1}
	\frac{\# \left( \left( \delta_n ((x+P_k) \cap K^n ) \right) \cap \ZZ^2 \right)}{\# (\delta_n P_k \cap \ZZ^2)}  \geq \frac{\# \left( \left( \delta_n ((x+P_k) \cap \tau_k \Delta_k ) \right) \cap \ZZ^2 \right)}{\# (\delta_n P_k \cap \ZZ^2)} \geq t_0'.
\end{align}
The inequality (\ref{EQ - Proof of the theorem - 1}) rewrites
\[ A_k \chi_{\delta_n K^n}(\delta_n x) = \frac{1}{\# ( \delta_n P_k \cap \ZZ^2 )} \sum_{(i,j) \in \delta_n (x+P_k)} \chi_{\delta_n K^n}(i,j) \geq t_0.\]
This implies that $\delta_n V^n \cap \ZZ^2 \subset \{ y \in \ZZ^2 : A^* \chi_{\delta_n K^n}(y) \geq t_0 \}$. Using Lemma \ref{LEM - Perron tree dans Z2}, we finally get
\[ \frac{\# \{ x \in \ZZ^2 : A^* \chi_{\delta_n K^n}(x) \geq t_0 \}}{\# \left (\delta_n K^n \cap \ZZ^2 \right )} \geq \frac{\# \left ( \delta_n V^n \cap \ZZ^2 \right )}{\# \left (\delta_nK^n \cap \ZZ^2 \right )} \gtrsim \frac{\# \left ( \delta_n \bigcup_{k=2^n}^{2^{n+1}-1} \Delta_k \cap \ZZ^2 \right )}{\# \left ( \delta_n \bigcup_{k=2^n}^{2^{n+1}-1} \tau_k \Delta_k \cap \ZZ^2 \right )} \gtrsim \frac{1}{\varepsilon_n} \to \infty. \]
The last inequality shows that (\ref{EQ - défintion du weak-type dans lp(Z2)}) doesn't hold, so $A^*$ cannot satisfy a Tauberian condition.
\end{proof}

The proof of Theorem \ref{THM - bad set} is now trivial, it suffices to apply Proposition \ref{PROP - not of weak-type in Z2} and Lemma \ref{LEM - Transfert lemma Tauberian condition}. Sawyer-Stein principle finally gives us the result.

\appendix
\section{Proof of a Sawyer-Stein principle}\label{APPENDIX - Sawyer Stein}

We restate the Sawyer-Stein principle used previously (Theorem \ref{THM - Sawyer Stein principle}). Recall that the sequence $\{R_n\}_{n \in \NN}$ consists of rectangles in $\RR^2$ that contain the origin, and such that $l(R_n) \to \infty$ as $n \to \infty$. If $f :X \to \RR$ and $n \in \NN$, we still denote by
\[ M_nf = \frac{1}{\# (R_n \cap \ZZ^2)} \sum_{(i,j) \in R_n} f(S^iT^j). \]
the average of $f$ over $R_n$, and by $M^*f = \sup_{n \in \NN} M_n|f|$ the associated maximal operator.

\begin{thm*}
	Let $1 \leq p < \infty$. The following are equivalent.
	\begin{enumerate}[label=(\roman*)]
		\item Given any $f \in L^p(X)$, the limit
		\[ \lim_{n \to \infty} M_n f(x) = \lim_{n \to \infty} \frac{1}{\# (R_n \cap \ZZ^2)} \sum_{(i,j) \in R_n} f(S^iT^jx) \]
		exists for almost every $x \in X$.
		\item The maximal operator $M^*$ is of weak-type $(p,p)$.
	\end{enumerate}
\end{thm*}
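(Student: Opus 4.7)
The plan is to handle the two implications separately using standard maximal-inequality techniques.

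For (ii) $\Rightarrow$ (i), I will introduce the oscillation $\Omega f := \limsup_n M_n f - \liminf_n M_n f$, which satisfies $0 \leq \Omega f \leq 2 M^* f$ pointwise. The weak-type hypothesis therefore yields $\mu(\{\Omega f > \lambda\}) \leq 2^p C \|f\|_{L^p(X)}^p / \lambda^p$ for all $f \in L^p(X)$ and $\lambda > 0$. It then suffices to produce a subspace $\DDD \subset L^p(X)$ dense in $L^p$ on which $\Omega \equiv 0$: for any $f \in L^p$ and any $\epsilon > 0$, decomposing $f = g + h$ with $g \in \DDD$ and $\|h\|_p < \epsilon$ yields $\mu(\{\Omega f > \lambda\}) = \mu(\{\Omega h > \lambda\}) \leq 2^p C (\epsilon / \lambda)^p$, which tends to $0$ as $\epsilon \to 0$ and hence forces $\Omega f = 0$ a.e.

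My choice of dense subspace will be $\DDD := \RR + \{g - g \circ S : g \in L^\infty(X)\}$. Constants satisfy $M_n c = c$, so $\Omega c = 0$. On a coboundary $f = g - g \circ S$, a reindexing of the second sum rewrites $M_n f(x)$ as
\[
\frac{1}{\#(R_n \cap \ZZ^2)} \Bigl( \sum_{(i,j) \in R_n \setminus (R_n + (1,0))} g(S^i T^j x) \;-\; \sum_{(i,j) \in (R_n + (1,0)) \setminus R_n} g(S^i T^j x) \Bigr);
\]
since $\#\bigl(R_n \triangle (R_n + (1,0))\bigr) = O(L(R_n))$ while Lemma \ref{LEM - Estimation volume - cardinal pour les process} gives $\#(R_n \cap \ZZ^2) \gtrsim l(R_n) L(R_n)$, one obtains $\|M_n f\|_\infty \lesssim \|g\|_\infty / l(R_n) \to 0$, so $\Omega f = 0$. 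Density of $\DDD$ in $L^p$ follows from the $L^p$ mean ergodic theorem applied to the ergodic transformation $S$: the $L^2$ orthogonal complement of the $S$-coboundaries consists of $S$-invariant functions, reduced to constants by ergodicity.

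For (i) $\Rightarrow$ (ii), I will argue by contradiction through a condensation-of-singularities construction. If $M^*$ fails to be of weak-type $(p,p)$, then for each $k \in \NN^*$ there exist $f_k \in L^p$ and $\lambda_k > 0$ with $\lambda_k^p \mu(\{M^* f_k > \lambda_k\}) > 4^{kp} \|f_k\|_p^p$. Homogeneity and the monotonicity of $M^*$ in $|\cdot|$ let me assume $f_k \geq 0$, $\lambda_k = 1$ and $\|f_k\|_p \leq 4^{-k}$; set $E_k := \{M^* f_k > 1\}$. Using the ergodicity of $S$, I will inductively select shifts $\tau_k := S^{a_k} T^{b_k}$ so that the translated bad sets accumulate, in the sense that $\mu\bigl(\bigcap_N \bigcup_{k \geq N} \tau_k^{-1} E_k\bigr) > 0$. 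Then $F := \sum_k f_k \circ \tau_k$ lies in $L^p$ (the norms sum to a finite quantity), and the nonnegativity of the $f_k$, combined with the fact that $M_n$ commutes with each $\tau_k$, yields $M^* F \geq (M^* f_k) \circ \tau_k > 1$ on $\tau_k^{-1} E_k$. Hence $M^* F = +\infty$ on the accumulation set, contradicting the a.e.\ finiteness of $M^* F$ forced by the hypothesized a.e.\ convergence of $M_n F$.

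The main obstacle will be the inductive selection of the shifts $\tau_k$, since the measures $\mu(E_k)$ may decay to zero, ruling out a naive Borel--Cantelli approach. I plan to handle this by a standard packing argument: at each step, having fixed $\tau_1,\ldots,\tau_{k-1}$, the mean ergodic theorem applied to $\chi_{E_k}$ under the $\ZZ^2$-action shows that some $(a_k, b_k)$ positions $\tau_k^{-1} E_k$ with substantial overlap against any prescribed set of large measure, and one then arranges the bookkeeping so that the complements of $\bigcup_{k \geq N} \tau_k^{-1} E_k$ shrink as $N \to \infty$.
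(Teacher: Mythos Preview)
Your argument for (ii) $\Rightarrow$ (i) is correct and essentially coincides with the paper's: both pass through the dense subspace of constants plus $L^\infty$ coboundaries (the paper uses $T$-coboundaries, you use $S$-coboundaries, which is immaterial since both are ergodic) and kill $M_n(g-g\circ S)$ via the symmetric-difference estimate $\#\bigl((R_n\triangle(R_n+e))\cap\ZZ^2\bigr)=O(L(R_n))$.

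Your argument for (i) $\Rightarrow$ (ii), however, has two genuine gaps. First, from $M^*F \geq (M^*f_k)\circ\tau_k > 1$ on $\tau_k^{-1}E_k$ you conclude $M^*F = +\infty$ on the $\limsup$; but being $>1$ for infinitely many $k$ only yields $M^*F > 1$, not $+\infty$. This one is easily repaired by taking $E_k = \{M^*f_k > k\}$ and adjusting the normalisation. The second gap is not repairable along the lines you sketch: you cannot in general choose shifts $\tau_k$ so that $\mu\bigl(\limsup_k \tau_k^{-1}E_k\bigr) > 0$ once $\sum_k\mu(E_k)<\infty$, and nothing in your construction prevents this (take $X=[0,1]$, an irrational rotation, and $E_k$ any interval of length $2^{-k}$: for every choice of shifts, $\mu\bigl(\bigcup_{k\geq N}\tau_k^{-1}E_k\bigr)\leq 2^{1-N}\to 0$). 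Your appeal to the mean ergodic theorem does not help, since the overlap it produces against a prescribed set $B$ is only of size $\mu(E_k)\mu(B)$, which decays with $\mu(E_k)$. The missing ingredient is precisely the content of Stein's maximal principle (equivalently Garsia's Continuity Principle for positive operators commuting with a mixing family), and that is exactly what the paper invokes: it observes that ergodicity of $T$ makes $\{T^k\}_{k\in\NN}$ a mixing family in Garsia's sense and cites \cite[p.~13]{Garsia1970TopicsIA} to conclude that a.e.\ finiteness of $M^*f$ for all $f\in L^p$ forces the weak-type $(p,p)$ bound. If you want to avoid the black box, the standard route is to first replace each $f_k$ by a finite superposition $\tilde f_k=\sum_{j=1}^{m_k} f_k\circ\sigma_{k,j}$ so that $\mu(\{M^*\tilde f_k>1\})\geq 1/2$; simultaneously controlling $m_k$ and $\|\tilde f_k\|_p$ is where the real work of the Continuity Principle lies, and it is not a bookkeeping matter.
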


In this section, we introduce a slightly different version of the maximal operator $M^*$. We define $M'^*$ as
\[ M'^*f := \sup_{n \in \NN} |M_nf| \]
for $f \in L^p(X)$, $1 \leq p \leq \infty$. If $1 \leq p < \infty$, we can show that $M'^*$ is of weak-type $(p,p)$ if and only if so is $M^*$. Therefore, it suffices to prove Sawyer-Stein principle for the operator $M'^*$. To simplify the notation, throughout this section, we denote $M^*$ the operator $M'^*$.

%
\begin{thm}\label{THM - Continuity principle for positive operators}
	Let $1 \leq p < \infty$. The following are equivalent.
	\begin{enumerate}[label=($\roman*$)]
		\item For every $f \in L^p(X)$, one has 
		\[ M^*f(x) < \infty \]
		for almost every $x \in X$.
		\item The maximal operator $M^*$ is of weak-type $(p,p)$.
	\end{enumerate}
\end{thm}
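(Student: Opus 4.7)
The implication $(ii) \Rightarrow (i)$ is immediate: the weak-type bound $\mu(\{M^* f > \lambda\}) \leq C(\|f\|_p/\lambda)^p$ tends to $0$ as $\lambda \to \infty$, so $\mu(\{M^* f = \infty\}) = 0$ for every $f \in L^p(X)$.

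For $(i) \Rightarrow (ii)$ I would argue by contrapositive: assuming $M^*$ fails the weak-type $(p,p)$ inequality, I plan to exhibit $F \in L^p(X)$ with $\mu(\{M^* F = \infty\}) > 0$, contradicting $(i)$. First, the failure of weak-type together with sublinearity $M^* f \leq M^* |f|$ and homogeneity yields, for any prescribed sequence $C_n \to \infty$ that I choose later, nonnegative functions $g_n \in L^p(X)$ with
\[ \mu(\{M^* g_n > 1\}) > C_n \|g_n\|_p^p. \]
Since $\mu(X) = 1$, this forces $\|g_n\|_p < C_n^{-1/p}$, but gives no lower bound on $\delta_n := \mu(\{M^* g_n > 1\})$, which may tend to $0$. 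This is the fundamental obstruction: without such a lower bound, a naive limsup argument on bad sets produces only a measure-zero ``infinite'' set.

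To resolve this I would use the translation invariance $M^*(f \circ S^i T^j) = (M^* f) \circ S^i T^j$ together with the ergodicity of $S$ (or $T$) to spread the bad set. A greedy selection argument, justified by the mean ergodic theorem (at each step one uses Fubini and Birkhoff to exhibit a shift that enlarges the already covered set by a definite amount), produces shifts $\sigma_{n,1}, \dots, \sigma_{n,N_n}$ of the form $S^{i} T^{j}$ with $N_n \lesssim 1/\delta_n$ such that the translates $\sigma_{n,k}^{-1}\{M^* g_n > 1\}$ are essentially pairwise disjoint and cover a set of measure at least $1/2$. Arranging the shifts to lie inside a Rokhlin tower of height $N_n$ further ensures that the translated supports of $g_n$ are themselves essentially disjoint. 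Setting $G_n := \sum_{k=1}^{N_n} g_n \circ \sigma_{n,k}$, monotonicity of $M^*$ on nonnegative functions then gives $\mu(\{M^* G_n > 1\}) \geq 1/2$, while the support disjointness upgrades the triangle inequality to
\[ \|G_n\|_p \lesssim N_n^{1/p} \|g_n\|_p \lesssim \delta_n^{-1/p} (\delta_n/C_n)^{1/p} = C_n^{-1/p}. \]

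Finally, choosing $C_n = n^{2p} 2^{np}$ so that $C_n^{-1/p} = 1/(n^2 2^n)$ and setting $F := \sum_n n \cdot G_n$, one has $\|F\|_p \leq \sum_n n \, C_n^{-1/p} = \sum_n 1/(n \, 2^n) < \infty$, so $F \in L^p(X)$. Monotonicity of $M^*$ again yields $M^* F \geq n M^* G_n$, hence $\mu(\{M^* F > n\}) \geq \mu(\{M^* G_n > 1\}) \geq 1/2$ for every $n$. As the events $\{M^* F > n\}$ are nonincreasing, $\mu(\{M^* F = \infty\}) = \lim_n \mu(\{M^* F > n\}) \geq 1/2 > 0$, contradicting $(i)$. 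The main technical difficulty will be the spreading step: securing disjointness of the translated supports (so that $\|G_n\|_p$ scales like $N_n^{1/p}$ rather than the wasteful $N_n$ coming from the naive triangle inequality) is what makes the construction succeed uniformly in $1 \leq p < \infty$, and this is where ergodicity of $S$ and $T$ enters in a crucial way.
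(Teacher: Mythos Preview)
The paper's proof is a one-line citation of Garsia's \emph{Continuity Principle for Positive Operators}, checking only that the family $\{T^k\}_{k\in\NN}$ is ``mixing'' in Garsia's sense (which follows from ergodicity of $T$). Your proposal is essentially an attempt to reprove that principle from scratch, and the overall architecture --- failure of weak-type gives functions with disproportionately large bad sets, ergodicity spreads the bad set to measure $\geq 1/2$, a weighted sum produces $F$ with $M^*F=\infty$ on a set of positive measure --- is exactly the Stein--Sawyer argument that underlies Garsia's theorem.

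There is, however, a genuine gap at the spreading step. You assert that ``arranging the shifts to lie inside a Rokhlin tower of height $N_n$ further ensures that the translated supports of $g_n$ are themselves essentially disjoint,'' and then use this to get $\|G_n\|_p \lesssim N_n^{1/p}\|g_n\|_p$. But the function $g_n$ is an arbitrary nonnegative element of $L^p(X)$; its support may well be all of $X$, and placing the shifts $\sigma_{n,k}$ along a Rokhlin tower says nothing about where $g_n\circ\sigma_{n,k}$ lives. Without disjointness the triangle inequality only gives $\|G_n\|_p \leq N_n\|g_n\|_p \lesssim \delta_n^{1/p-1}C_n^{-1/p}$, which blows up for $p>1$ as $\delta_n\to 0$. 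The standard fix, and the reason Garsia's principle is stated for \emph{positive} operators, is to take $G_n := \max_{1\leq k\leq N_n} g_n\circ\sigma_{n,k}$ rather than the sum. Monotonicity of $M^*$ on nonnegative functions still yields $M^*G_n \geq (M^*g_n)\circ\sigma_{n,k}$ for each $k$, so $\mu(\{M^*G_n>1\})\geq 1/2$ as before; and the pointwise bound $(\max_k h_k)^p \leq \sum_k h_k^p$ gives $\|G_n\|_p^p \leq N_n\|g_n\|_p^p$ directly, with no need for support disjointness. With this change your argument goes through for all $1\leq p<\infty$.
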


To prove Theorem \ref{THM - Continuity principle for positive operators}, we will need the following result, known as the \textit{Continuity Principle for Positive Operators}. We refer to Garsia \cite[p13]{Garsia1970TopicsIA}, for a proof. In the next statement, we use the notation $\AAA^* := \{ A \in \AAA : \mu(A) > 0\}$.

\begin{thm}\label{THM - Continuity principle for positive operators Garsia}
	Let $1 \leq p < \infty$, and let $\{T_n\}_{n \in \NN}$ be a sequence of positive linear operators, continuous in measure\footnote{Recall that a linear operator $T:X \to X$ is said to be \textit{continuous in measure} on $L^p(X)$ if for any sequence $\{f_n\}_{n \in \NN} \subset L^p(X)$ and for any $f \in L^p(X)$ such that $\|f_n-f\|_{L^p(X)} \to 0$ as $n \to \infty$, one has for any $\varepsilon > 0$:
		\[ \mu (\{ x \in X : |Tf_n(x)-Tf(x)|> \varepsilon \}) \underset{n \to \infty}{\lra} 0. \]} on $L^p(X)$. We define $T^*f:= \sup_{n \in \NN}| T_n f|$, for $f \in L^p(X)$. We assume that there exists $\EEE$, a family of measure-preserving transformations on $X$ satisfying the following:
	\begin{align}\label{EQ - mixing property}
		\fa A,B \in \AAA^*, \, \fa \alpha > 1, \, \ex E \in \EEE, \, \mu(A \cap E^{-1}B) \leq \alpha \mu(A) \mu(B);
	\end{align}
	\begin{align}\label{EQ - (weak) commute property}
		\fa E \in \EEE, \, \fa f \in L^p(X), \, T^*(f \circ E) \geq (T^*f) \circ E.
	\end{align}	
	Under these assumptions, the following are equivalent.
	\begin{enumerate}[label=($\roman*$)]
		\item For every $f \in L^p(X)$, one has
	\[  T^*f(x) < \infty \]
	for almost every $x \in X$.
		\item The maximal operator $ T^*$ is of weak-type $(p,p)$.
	\end{enumerate}
\end{thm}

\begin{rmq}
	A family of transformations $\EEE$ satisfying (\ref{EQ - mixing property}) is said to be \textit{mixing}.
\end{rmq}
We can now move to the proof of Theorem \ref{THM - Continuity principle for positive operators}
\begin{proof}[Proof of Theorem \ref{THM - Continuity principle for positive operators}]
	We would like to apply Theorem \ref{THM - Continuity principle for positive operators Garsia} to the sequence of operators $\{M_n\}_{n \in \NN}$. It is clear that these operators are positive, linear, and it is not hard to show that they are also continuous in measure (we can for instance apply Markov's inequality). Thus, it suffices to find a family $\EEE$ of measure preserving transformations satisfying (\ref{EQ - mixing property}) and (\ref{EQ - (weak) commute property}). We are going to show that the family $\EEE = \{T^n\}_{n \in \NN}$ is suitable. Property (\ref{EQ - (weak) commute property}) is easy to verify. Let us show that this family satisfies (\ref{EQ - mixing property}). Since $T$ is ergodic, we know that for any $A,B \in \AAA$, the following holds:
	\[ \frac{1}{n+1} \sum_{i=0}^n \mu(A \cap T^{-i}B) \underset{n \to \infty}{\lra} \mu(A)\mu(B) \]
	(see for example Petersen \cite[p56]{Petersen_1983}). Arguing by contradiction, if there exist $A,B \in \AAA^*$ and $\alpha > 1$ such that for any $n \in \NN$, one has $\mu(A \cap T^{-n}B) > \alpha \mu(A) \mu(B)$, we would have
	\[ \mu(A)\mu(B) = \lim_{n \to \infty}  \frac{1}{n+1} \sum_{i=0}^n \mu(A \cap T^{-i}B) \geq \lim_{n \to \infty} \frac{1}{n+1} \sum_{i=0}^n \alpha \mu(A) \mu(B) = \alpha \mu(A) \mu(B),\]
	which is a contradiction and concludes the proof. 
\end{proof}

To conclude the proof of the Sawyer-Stein principle, we have to show that for all $f \in L^p(X)$, $M^*f$ is finite almost everywhere if and only if the averages $M_nf$ converge almost everywhere. The ``if'' part is clear, we focus on the converse. The following result asserts that if $M^*f$ is finite almost everywhere for all $f \in L^p(X)$, then it suffices to show the convergence almost everywhere of the averages $M_nf$ only for $f$ in a dense class of functions in $L^p(X)$. See \cite[Theorem 1.1.1., p3]{Garsia1970TopicsIA} for the proof.

\begin{thm}\label{THM - weak-type implies that the set of convergence is closed}
	Let $1 \leq p < \infty$, and assume
		\begin{align}\label{EQ - M*f(x)<infty}
		M^*f(x) < \infty
	\end{align}
	for almost every $x \in X$, and all $f \in L^p(X)$. Denote
	\[ \DDD = \{ f \in L^p(X) : \{M_nf\}_{n \in \NN} \text{ converges almost everywhere} \}. \]
	Then $\DDD$ is closed in $L^p(X)$.
\end{thm}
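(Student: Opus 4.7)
The plan is to exploit the fact that the hypothesis of the theorem, combined with the already-established Theorem \ref{THM - Continuity principle for positive operators}, upgrades the qualitative statement ``$M^*f<\infty$ a.e. for every $f\in L^p(X)$'' into the quantitative weak-type $(p,p)$ bound on $M^*$. Once we have that weak-type bound at our disposal, closedness of $\DDD$ is a soft argument based on the oscillation functional
\[ \Omega f(x) := \limsup_{n\to\infty} M_n f(x) - \liminf_{n\to\infty} M_n f(x), \]
which is well defined on $\{M^*f<\infty\}$, hence almost everywhere.

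Concretely, let $\{f_k\}\subset\DDD$ converge to $f$ in $L^p(X)$; we must show $f\in\DDD$. Since each $f_k$ lies in $\DDD$, we have $\Omega f_k = 0$ almost everywhere. The operators $M_n$ are linear, so $M_n f = M_n f_k + M_n(f-f_k)$, which yields the pointwise subadditivity
\[ \Omega f(x) \leq \Omega f_k(x) + \Omega(f-f_k)(x) = \Omega(f-f_k)(x) \quad \text{a.e.} \]
Moreover $\Omega g \leq 2\,M^* g$ pointwise (on the set where $M^*g$ is finite), so $\Omega f \leq 2\,M^*(f-f_k)$ almost everywhere.

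Fix $\lambda>0$. Applying the weak-type $(p,p)$ inequality for $M^*$ supplied by Theorem \ref{THM - Continuity principle for positive operators}, we obtain
\[ \mu\bigl(\{x\in X : \Omega f(x) > \lambda\}\bigr) \leq \mu\bigl(\{x\in X : M^*(f-f_k)(x) > \lambda/2\}\bigr) \leq C\left(\frac{2\|f-f_k\|_{L^p(X)}}{\lambda}\right)^{\!p}. \]
Letting $k\to\infty$, the right-hand side tends to $0$, so $\mu(\{\Omega f > \lambda\})=0$. Taking a countable sequence $\lambda\to 0^+$ shows $\Omega f = 0$ almost everywhere. Combined with $M^* f <\infty$ a.e.\ (which is part of the theorem's hypothesis, ensuring $\limsup_n M_n f$ and $\liminf_n M_n f$ are both finite), this means $\{M_n f\}_{n\in\NN}$ converges almost everywhere, i.e. $f\in\DDD$, as required.

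I do not expect a genuine obstacle here: the only delicate point is making sure the weak-type bound is actually in hand, which is exactly the content of the preceding Theorem \ref{THM - Continuity principle for positive operators}, and the observation that $M^*$ dominates the oscillation by a factor of $2$, which is a purely algebraic consequence of the triangle inequality applied to $\limsup$ and $\liminf$.
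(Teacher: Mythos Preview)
Your argument is correct. The paper does not actually prove this statement---it simply cites Garsia \cite[Theorem 1.1.1, p.~3]{Garsia1970TopicsIA}---so there is no detailed proof in the paper to compare against. What you have written is the standard oscillation argument underlying the Banach principle, and all the steps are sound: the subadditivity $\Omega f \leq \Omega f_k + \Omega(f-f_k)$ holds on the full-measure set where all three maximal functions are finite, and the bound $\Omega g \leq 2M^*g$ is immediate since in this appendix $M^*g = \sup_n |M_n g|$.

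The one point worth flagging is that your route relies on first invoking Theorem~\ref{THM - Continuity principle for positive operators} to upgrade the hypothesis to a genuine weak-type $(p,p)$ inequality, and that theorem uses the ergodicity of the system (via the ``mixing'' family $\{T^k\}$). Garsia's version of the Banach principle is more general: it works in an arbitrary Banach space setting and obtains continuity of $M^*$ at $0$ in measure from a Baire category argument, without any ergodicity assumption. In the present paper this distinction is harmless---ergodicity is a standing hypothesis---so your approach is a legitimate and arguably cleaner shortcut here, but it is slightly less portable than the classical argument the paper is citing.
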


We then have the immediate corollary. We use the notations of Theorem \ref{THM - weak-type implies that the set of convergence is closed}.

\begin{cor}\label{COR - Sawyer Stein principle proof}
If $\DDD$ contains a dense subspace of $L^p(X)$, then the averages $M_nf$ converge almost everywhere for all $f \in L^p(X)$.
\end{cor}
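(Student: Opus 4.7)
The plan is to apply Theorem \ref{THM - weak-type implies that the set of convergence is closed} directly, and then invoke the elementary topological fact that a closed subset of a metric space containing a dense subset must equal the whole space. The corollary inherits the standing hypothesis of the preceding theorem, namely that $M^*f(x)<\infty$ almost everywhere for every $f\in L^p(X)$. Under this hypothesis, the theorem tells us that
\[ \DDD = \{ f \in L^p(X) : \{M_nf\}_{n \in \NN} \text{ converges almost everywhere} \} \]
is a closed subset of $L^p(X)$.

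Suppose now that $\DDD_0 \subset \DDD$ is a dense subspace of $L^p(X)$. Taking closures in $L^p(X)$ one gets
\[ L^p(X) \=  \overline{\DDD_0} \,\subset\, \overline{\DDD} \= \DDD, \]
so $\DDD = L^p(X)$, which is precisely the statement that the averages $M_nf$ converge almost everywhere for every $f\in L^p(X)$.

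There is essentially no obstacle to carrying this out: the entire substance has been absorbed into Theorem \ref{THM - weak-type implies that the set of convergence is closed}, and only elementary metric topology remains. In the broader context of the Sawyer--Stein principle, this corollary together with Theorem \ref{THM - Continuity principle for positive operators} reduces the a.e.\ convergence problem to two independent tasks: establishing a weak-type $(p,p)$ inequality (which, via the Continuity Principle for Positive Operators, yields finiteness of $M^*f$ and hence the closedness of $\DDD$), and exhibiting any dense class of functions for which pointwise convergence can be verified by direct means. For ergodic averages, a natural such class is the span of $L^\infty$-coboundaries $\{g - g\circ S^iT^j : g \in L^\infty(X),\, (i,j) \in \ZZ^2\}$ together with the $\{S,T\}$-invariant functions, whose density in $L^p(X)$ follows from the standard ergodic decomposition argument.
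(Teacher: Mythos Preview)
Your argument is correct and matches the paper's approach: the paper does not spell out a proof but declares the corollary ``immediate'' from Theorem~\ref{THM - weak-type implies that the set of convergence is closed}, which is exactly the closure-plus-density reasoning you give. The additional commentary on the dense class (coboundaries and invariant functions) anticipates precisely what the paper does next with the set $\CCC = \{c + g - g\circ T : c\in\RR,\ g\in L^\infty(X)\}$.
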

%
%

It is well known that the following subset
	\[ \CCC := \{ c + g-g \circ T : c \in \RR, g \in L^\infty(X)\} \]
is dense in $L^p(X)$ (see for instance \cite{CVmoving.av.BJR}). The last thing to verify is that if $f$ is in $\CCC_\infty$, then the averages $M_nf$ converge almost everywhere. Let $f=g-g \circ T$ with $g \in L^\infty(X)$ and let $x \in X$. One has
\begin{align*}
	M_nf(x) &= \frac{1}{\# (R_n \cap \ZZ^2)} \sum_{(i,j) \in R_n} g(S^iT^jx) - \frac{1}{\# (R_n \cap \ZZ^2)} \sum_{(i,j) \in R_n} g(S^iT^{j+1}x) \\
	&= \frac{1}{\# (R_n \cap \ZZ^2)} \sum_{(i,j) \in R_n} g(S^iT^jx) - \frac{1}{\# (R_n \cap \ZZ^2)} \sum_{(i,j) \in R_n+(0,1)} g(S^iT^jx) \\
	&= \frac{1}{\# (R_n \cap \ZZ^2)} \sum_{(i,j) \in \Delta_n} g(S^iT^jx),
\end{align*}
where $\Delta_n = R_n \Delta (R_n+(0,1))$ (here $\Delta$ denotes the symmetric difference). So, we finally have for almost every $x \in X$
\[ |M_nf(x)| \leq \frac{\# (\Delta_n \cap \ZZ^2)}{\# (R_n \cap \ZZ^2)} \| g \|_{L^\infty(X)}. \]

If $n$ is large enough, we saw that we can compare the cardinality of $\R_n \cap \ZZ^2$ (respectively $\Delta_n \cap \ZZ^2$) and the volume of $R_n$ (respectively $\Delta_n$). Furthermore, it is clear that
\[ | \Delta_n | \lesssim l(R_n)+L(R_n). \]
So if $n$ is large enough we get
	\[ \frac{\# (\Delta_n \cap \ZZ^2)}{\# (R_n \cap \ZZ^2)} \lesssim \frac{|\Delta_n|}{|R_n|} \lesssim \frac{l(R_n)+L(R_n)}{l(R_n)L(R_n)} \underset{n \to \infty}{\lra}0.\]
Then if $f \in \CCC$, we know that $M_nf$ converges almost everywhere. By Corollary \ref{COR - Sawyer Stein principle proof} and Theorem \ref{THM - weak-type implies that the set of convergence is closed}, the result is now proved for $1 \leq p < \infty$.

\section{Proof of the transfer lemmas}\label{APPENDIX - transfer lemma}

We start by proving Lemma \ref{LEM - Transfert lemma 1<p<infty}.

\begin{proof}[Proof of Lemma \ref{LEM - Transfert lemma 1<p<infty}]
	We first assume that $1 \leq p < \infty$. We firstly assume that $M^*$ is of weak-type $(p,p)$. Let $ N \in \NN^* $. By Rokhlin's lemma (see \cite{MR316680}), there exists a set $ \widetilde{\Omega}_N \in \AAA $ such that the sets $ \{ S^iT^j \widetilde{\Omega}_N\}_{0 \leq i,j \leq 2N-1} $ are disjoint and
	\[ \mu \left ( \bigcup_{i,j=0}^{2N-1} S^iT^j \widetilde{\Omega}_N \right ) \geq 1-\frac{1}{N^3}.\]
	Let $ \Omega_N :=  S^{N}T^{N} \widetilde{\Omega}_N$, and denote
	\[ X_N := \bigcup_{i,j=-N}^{N-1} S^iT^j \Omega_N = \bigcup_{i,j=-N}^{N-1} S^{i+N}T^{j+N} \widetilde{\Omega}_N = \bigcup_{i,j=0}^{2N-1} S^iT^j \widetilde{\Omega}_N . \]
	The sets $ \{ S^{i+N}S^{j+N}\widetilde{\Omega}_N\}_{-N \leq i,j \leq N-1} $ are pairwise disjoint and $ S,T $ are invertible measure preserving transformations, so we have $ \mu ( X_N ) = N^2 \mu (\Omega_N) \geq 1- \frac{1}{N^3}$. Hence we can write
	\[ X = X_N \cup X \backslash X_N, \quad \mu (X \backslash X_N) \leq \frac{1}{N^3}. \]
	
%
%
%
%
%
%

\begin{figure}
	\centering
	\includegraphics[width=0.5\textwidth]{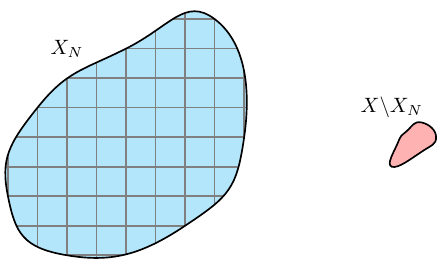}
	\caption{The space $ X=X_N \cup X \backslash X_N $.}
\end{figure}
	
	Let $ \lambda > 0 $ and $ \varphi \in l^p(\ZZ^2) $ be fixed. We are going to construct a function $ f \in L^p(X) $ linked to $ \varphi $. If $ x \in X_N $, there exists a unique couple $ (i,j) \in \{ -N, \ldots , N-1 \}^2$ such that $ x \in S^iT^j\Omega_N $. Hence, we can define an application $ \gamma_N : X \to \ZZ^2 $ as follow:
	\[\begin{tabular}{|crcl}
		$ \gamma_N : $ & $ X $ & $ \lra $ & $ \NN^2 $ \\
		& $ x $ & $ \lms $ & $ \begin{cases}
			(k,l) \quad \text{if $ x \in S^kT^l\Omega_N $ for some $ (k,l) \in \{ -N, \ldots , N-1 \}^2$,} \\
			(0,0) \quad \text{if $ x \in X \backslash X_N $.}
		\end{cases} $
	\end{tabular}\]
	We then define
	\[\begin{tabular}{|crcl}
		$ f_N : $ & $ X $ & $ \lra $ & $ \RR $ \\
		& $ x $ & $ \lms $ & $  f_N(x) := \varphi \circ \gamma_N(x). $
	\end{tabular}\]
	
	\begin{claim}\label{Claim 1}
		The function $ f_N $ defined above lies in $ L^p(X) $, and $ \| f_N \|_{L^p(X)} \leq  \left ( \frac{1}{N^3} + \mu (\Omega_N) \right )^{\frac{1}{p}} \| \varphi \|_{l^p(\ZZ^2)}$.
	\end{claim}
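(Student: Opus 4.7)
The plan is to exploit the fact that $f_N$ is constant on each of the pieces of the decomposition of $X$ induced by $\gamma_N$, so its $L^p$ norm becomes a weighted sum of $|\varphi(k,l)|^p$ that can be directly compared to $\|\varphi\|_{l^p(\ZZ^2)}^p$.

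First I would write the partition explicitly: by construction of $X_N$ and the fact that $\{S^kT^l\Omega_N\}_{(k,l) \in \{-N,\ldots,N-1\}^2}$ are pairwise disjoint,
\begin{equation*}
X = (X \setminus X_N) \sqcup \bigsqcup_{(k,l) \in \{-N,\ldots,N-1\}^2} S^kT^l\Omega_N,
\end{equation*}
and on each slice $S^kT^l\Omega_N$ the map $\gamma_N$ is identically equal to $(k,l)$, so $f_N$ is identically equal to $\varphi(k,l)$ there, while on $X \setminus X_N$ it equals the constant $\varphi(0,0)$. Since $\varphi \in l^p(\ZZ^2) \subset l^\infty(\ZZ^2)$, the function $f_N$ is measurable and bounded, hence lies in $L^p(X)$.

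Next, I would compute the $L^p$ norm directly from this partition, using that $S$ and $T$ are measure preserving so $\mu(S^kT^l\Omega_N) = \mu(\Omega_N)$ for every $(k,l)$:
\begin{align*}
\int_X |f_N|^p \, \d\mu &= |\varphi(0,0)|^p \mu(X \setminus X_N) + \sum_{(k,l) \in \{-N,\ldots,N-1\}^2} |\varphi(k,l)|^p \mu(\Omega_N) \\
&\leq |\varphi(0,0)|^p \, \frac{1}{N^3} + \mu(\Omega_N) \sum_{(k,l) \in \ZZ^2} |\varphi(k,l)|^p.
\end{align*}
Finally I would bound $|\varphi(0,0)|^p \leq \|\varphi\|_{l^p(\ZZ^2)}^p$ and factor out to obtain
\begin{equation*}
\|f_N\|_{L^p(X)}^p \leq \left(\frac{1}{N^3} + \mu(\Omega_N)\right) \|\varphi\|_{l^p(\ZZ^2)}^p,
\end{equation*}
which gives the claimed inequality after taking $p$-th roots.

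There is no real obstacle here; the only point to be careful about is that the bound $\mu(X \setminus X_N) \leq 1/N^3$ follows from the Rokhlin-type estimate $\mu(X_N) \geq 1 - 1/N^3$ established just above, and that the contribution of $X \setminus X_N$ has to be absorbed via $|\varphi(0,0)|^p \leq \|\varphi\|_{l^p(\ZZ^2)}^p$ rather than attempting a separate estimate.
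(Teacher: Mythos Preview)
Your proof is correct and follows essentially the same approach as the paper: decompose $X$ into $X\setminus X_N$ and the pieces $S^kT^l\Omega_N$, use that $f_N$ is constant on each piece, apply measure preservation and the bound $\mu(X\setminus X_N)\leq 1/N^3$, and finish with $|\varphi(0,0)|^p\leq \|\varphi\|_{l^p(\ZZ^2)}^p$. The only addition is your explicit remark on measurability and boundedness of $f_N$, which the paper omits.
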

	
	\begin{proof}[Proof of the claim.]
		It's a direct calculation. One has
		\begin{align*}
			\| f_N \|_{L^p(X)}^p &= \int_{X \backslash X_N} |f_N|^p \d \mu + \int_{X_N} |f_N|^p \d \mu \\
			&= \int_{X \backslash X_N} |\varphi(0,0)|^p \d \mu + \sum_{-N \leq i,j \leq N-1} \int_{S^iT^j\Omega_N} |f_N|^p \d \mu \\
			&= |\varphi(0,0)|^p \mu (X \backslash X_N) + \mu(\Omega_N) \sum_{-N \leq i,j \leq N-1} |\varphi(i,j)|^p \\
			&\leq \frac{|\varphi(0,0)|^p}{N^3} + \mu(\Omega_N) \sum_{-N \leq i,j \leq N-1} |\varphi(i,j)|^p
		\end{align*}
		where the last line comes from the fact that $ \mu (S^iT^j\Omega_N)=\mu(\Omega_N) $ and $ \mu (X \backslash X_N) \leq \frac{1}{N^3} $. By writing $ |\varphi(0,0)|^p \leq \| \varphi \|^p_{l^p(\ZZ^2)} $ and $\sum_{-N \leq i,j \leq N-1} |\varphi (i,j)|^p \leq  \| \varphi \|^p_{l^p(\ZZ^2)}  $, we get the desired result.
	\end{proof}
	
	We denote
	\[ E_\lambda (\varphi) := \{(k,l) \in \ZZ^2 : A\varphi (k,l) \geq \lambda \} \quad \text{and} \quad F_\lambda (f_N) := \{ x \in X : Mf_N(x) \geq \lambda\}. \]
	By assumption we know that
	\begin{align}\label{Assumption}
		\mu (F_\lambda (f_N)) \leq C \left( \frac{\| f_N \|_{L^p(X)}}{\lambda} \right)^p 
	\end{align}
	holds for any $N \in \NN^*$. Since we are able to compare the norm of $ \varphi $ and the norm of $ f_N $, our goal is now to compare the quantities $ \# E_\lambda (\varphi) $ and $ \mu (F_\lambda (f_N)) $.
	We introduce some truncated versions of our maximal operators. For $m \in \NN^*$, we define $A^m := \sup_{n=1, \ldots , m} A_n | \cdot |$ and $M^m := \sup_{n=1, \ldots , m} M_n | \cdot |$. Let's also denote
	\[ E^m_\lambda(\varphi) := \{ (k,l) \in \ZZ^2 : A^m\varphi (k,l) \geq \lambda \} \quad \text{and} \quad F^m_\lambda(f_N):= \{ x \in X : M^mf_N(x) \geq \lambda \}. \]
	Note that since $M^m f \leq Mf$ for each $f \in L^1(X)$, $M^m$  is also of weak-type $(1,1)$ and the following holds for any $N \in \NN^*$:
	\begin{align}\label{Assumption truncated}
		\mu (F^m_\lambda (f_N)) \leq C \left( \frac{\| f_N \|_{L^p(X)}}{\lambda} \right)^p .
	\end{align}
	
	\begin{claim}\label{Claim 2}
		If $m \in \NN^*$, there exists an integer $N_m$ depending on $m$ such that
\[		\# \left (E^m_\lambda(\varphi) \cap \{-N_m, \ldots , N_m-1\} \right ) \leq \frac{\mu (F^m_\lambda (f_{N_m}))}{\mu(\Omega_{N_m})}.\]
	\end{claim}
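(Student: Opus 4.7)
The plan is to transfer each lattice point $(k,l)$ where the discrete maximal function exceeds $\lambda$ into a ``slice'' $S^k T^l \Omega_{N_m}$ of the Rokhlin tower lying entirely inside $F^m_\lambda(f_{N_m})$, and then to add up disjoint measures.

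First, I would let $D_m := \max_{1 \leq n \leq m} \max\{|i|, |j| : (i,j) \in R_n \cap \ZZ^2\}$, which is finite since only finitely many rectangles are involved. Then I would pick $N_m$ large, and in applying the Rokhlin construction use a parameter at least $N_m + D_m$ (retaining the notation $\Omega_{N_m}$ for the resulting block), so that the translates $S^a T^b \Omega_{N_m}$ for $(a,b) \in \{-N_m - D_m, \ldots, N_m + D_m - 1\}^2$ are pairwise disjoint. The purpose of this padding is that every shift $(k+i, l+j)$, with $(k,l)$ in the target cube $\{-N_m, \ldots, N_m-1\}^2$ and $(i,j) \in R_n$ for some $n \leq m$, still lies in the coded range of $\gamma_{N_m}$.

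Next, for a fixed $(k,l) \in E^m_\lambda(\varphi) \cap \{-N_m,\ldots,N_m-1\}^2$, I would pick $n \leq m$ realizing $A_n|\varphi|(k,l) \geq \lambda$. For any $x \in S^k T^l \Omega_{N_m}$ and any $(i,j) \in R_n \cap \ZZ^2$, the point $S^i T^j x$ lies in $S^{k+i} T^{l+j} \Omega_{N_m}$, and $(k+i, l+j)$ stays in the coded range by the choice of $D_m$, so $\gamma_{N_m}(S^i T^j x) = (k+i, l+j)$ and hence $f_{N_m}(S^i T^j x) = \varphi(k+i, l+j)$. Averaging over $R_n$ then yields
\[ M_n |f_{N_m}|(x) \;=\; A_n |\varphi|(k,l) \;\geq\; \lambda, \]
so $x \in F^m_\lambda(f_{N_m})$. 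This gives the inclusion $S^k T^l \Omega_{N_m} \subset F^m_\lambda(f_{N_m})$.

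Finally, since the translates $\{S^k T^l \Omega_{N_m}\}_{(k,l) \in \{-N_m,\ldots,N_m-1\}^2}$ are pairwise disjoint and each has measure $\mu(\Omega_{N_m})$, summing the inclusions over $(k,l) \in E^m_\lambda(\varphi) \cap \{-N_m,\ldots,N_m-1\}^2$ gives
\[ \mu(\Omega_{N_m}) \cdot \# \bigl(E^m_\lambda(\varphi) \cap \{-N_m, \ldots, N_m-1\}^2\bigr) \;\leq\; \mu\bigl(F^m_\lambda(f_{N_m})\bigr), \]
which is the claimed inequality after dividing by $\mu(\Omega_{N_m})$. The only real obstacle is the index bookkeeping, \emph{i.e.}\ making $N_m$ large enough relative to the diameters of $R_1, \ldots, R_m$ so that $\gamma_{N_m}$ records lattice displacements faithfully for all the points appearing in $M_n|f_{N_m}|$; once that is arranged, disjointness in the Rokhlin tower closes the argument.
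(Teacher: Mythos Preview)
Your proof is correct and follows the same strategy as the paper: for each $(k,l)$ where $A^m|\varphi|\geq\lambda$, show that the whole slice $S^kT^l\Omega_{N_m}$ lies inside $F^m_\lambda(f_{N_m})$ by checking $M_n|f_{N_m}|(x)=A_n|\varphi|(k,l)$, then sum disjoint measures. The one place you differ is in the bookkeeping: the paper simply writes ``since the $R_n$, $n\leq m$, are bounded, we can choose $N=N_m$ large enough'' so that the identity $M_n|f_{N_m}|(x)=A_n|\varphi|(k,l)$ holds, whereas you make this precise by padding the Rokhlin tower with parameter $N_m+D_m$ so that all shifts $(k+i,l+j)$ remain in the coded range of $\gamma_{N_m}$. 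Your version is the more careful one---as literally written, the paper's choice does not prevent $(k,l)$ from sitting at the edge of $\{-N_m,\ldots,N_m-1\}^2$---but the underlying idea is identical.
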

	
	\begin{proof}[Proof of the claim]
		One has for any $N$
		\begin{align*}
			\mu (F^m_\lambda (f_N)) &= \mu (F^m_\lambda (f_N) \cap X \backslash X_N) + \sum_{-N \leq k,l \leq N-1} \mu \left (F^m_\lambda(f_N) \cap S^kT^l \Omega_N \right ) \\
			&\geq 0 + \sum_{(k,l) \in E_\lambda^m (\varphi) \cap \{-N, \ldots , N-1\} } \mu \left (F^m_\lambda(f_N) \cap S^kT^l \Omega_N \right ).
		\end{align*}
		Fix $ (k,l) \in E_\lambda^m (\varphi) \cap \{-N, \ldots , N-1\}$, and let $ x \in S^kT^l\Omega_N $. For $ 1 \leq n \leq m $ we have
		\begin{align*}
			A_n|\varphi|(k,l) &= \frac{1}{\# (R_n \cap \ZZ^2)} \sum_{(i,j) \in R_n} |\varphi(k+i,l+j)| \\
			&= \frac{1}{\# (R_n \cap \ZZ^2)} \sum_{\substack{(i,j) \in R_n \\  k+i,l+j \in \{ -N , \ldots , N-1 \} }} |\varphi(k+i,l+j)| \\
			&\hspace{5cm} + \frac{1}{\# (R_n \cap \ZZ^2)} \sum_{\substack{(i,j) \in R_n \\  k+i,l+j \notin \{ -N , \ldots , N-1 \} }} |\varphi(k+i,l+j)| \\
			&= \frac{1}{\# (R_n \cap \ZZ^2)} \sum_{\substack{(i,j) \in R_n \\   k+i,l+j \in \{ -N , \ldots , N-1 \} }} |f_N(S^iT^jx)| \\
			&\hspace{5cm} + \frac{1}{\# (R_n \cap \ZZ^2)} \sum_{\substack{(i,j) \in R_n \\  k+i,l+j \notin \{ -N , \ldots , N-1 \} }} |\varphi(k+i,l+j)|.
		\end{align*}
		Since the sets $\{R_n\}_{n=1, \ldots , m}$ are bounded, we can choose $N=N_m$ large enough such that
		\[ A_n | \varphi | (k,l) =  \frac{1}{\# (R_n \cap \ZZ^2)} \sum_{(i,j) \in R_n } |f_{N_m}(S^iT^jx)| = M_n |f_{N_m}|(x)\]
		for any $n \in \{1, \ldots , m\}$, so $M^m f_{N_m} (x) = A^m \varphi (k,l) \geq \lambda$ and $ S^kT^l \Omega_{N_m} \subset F^m_\lambda (f_{N_m}) $. The calculation is now easier:
		\begin{multline*}
			\mu (F^m_\lambda(f_{N_m}) ) \geq \sum_{(k,l) \in E_\lambda^m (\varphi) \cap \{-N_m, \ldots , N_m-1\}} \mu \left ( S^kT^l \Omega_{N_m} \right ) \\
			 = \mu ( \Omega_{N_m}) \# \left (E^m_\lambda (\varphi) \cap \{ -N_m , \ldots , N_m-1 \}\right ),
		\end{multline*}
		and Claim \ref{Claim 2} is proved.
	\end{proof}
	
	Let $m \in \NN$ and $N_m \in \NN^*$ be associated to $m$ as in the Claim \ref{Claim 2}. We can now bring back together the results of Claim \ref{Claim 1}, Claim \ref{Claim 2} and (\ref{Assumption truncated}), to obtain
	\begin{align*}
		\# \left (E_\lambda^m (\varphi) \cap \{-N_m, \ldots N_m-1 \} \right ) &\leq \frac{\mu (F^m_\lambda (f_{N_m}))}{\mu(\Omega_{N_m})} \\
		&\leq  \frac{C}{\mu(\Omega_{N_m})} \left( \frac{\| f_{N_m} \|_{L^p(X)}}{\lambda} \right)^p \\
		&\leq C \left ( 1 + \frac{1}{{N_m}^3 \mu(\Omega_{N_m})} \right ) \left( \frac{\| \varphi \|_{l^p(\ZZ^2)} }{\lambda}\right)^p  .
	\end{align*}
	The next step is to make $ m $ tend towards $ \infty $. We can assume that $N_m \to \infty$ when $m \to \infty$. On the one hand, we clearly have
	\[ \# E_\lambda (\varphi) = \lim_{m \to \infty} \# \left (E_\lambda^m (\varphi) \cap \{-N_m, \ldots N_m-1 \} \right ) . \]
	On the other hand $ N_m^2\mu(\Omega_{N_m}) \geq 1-\frac{1}{{N_m}^3}$, so that:
	\[\lim_{m \to \infty} \frac{1}{N_m^3\mu(\Omega_{N_m})} = 0.\]
	We finally have
	\[ \# E_\lambda (\varphi) \leq C  \left( \frac{\| \varphi \|_{l^p(\ZZ^2)} }{\lambda}\right)^p, \]
	which is what we wanted.\\
	
	Let's move on proving the converse. Assume that $A^*$ is of weak-type $(p,p)$. Let $K \in \NN$, and $f \in L^p(X)$. For $x \in X$, define
\begin{align}\label{EQ - transfer lemma - déf de phi x à partir de f}
		\varphi_x(k,l) := \begin{cases}
		f(S^kT^lx) \text{ if $|k|,|l| \leq K$,} \\
		0 \text{ otherwise.}
	\end{cases}
\end{align}
	For $n \in \NN$, define
	\[ i_\mathrm{max}(n) := \max_{(k,l) \in R_n} k \quad \text{and} \quad j_\mathrm{max}(n) := \max_{(k,l) \in R_n} l.\]
	For $n \in \NN$ and $(k,l) \in \ZZ^2$, one has
	\begin{align*}
		A_n |\varphi_x |(k,l) = \frac{1}{\# (R_n \cap \ZZ^2)} \sum_{(i,j) \in R_n}  |\varphi_x (k+i,l+j)| = \frac{1}{\# (R_n \cap \ZZ^2)} \sum_{\substack{(i,j) \in R_n \\   |k+i|,|l+j| \leq K }} |f(S^{k+i}T^{l+j}x)|.
	\end{align*}
	Given $m \in \NN$, for any $n \in \NN$ satisfying $|i_\mathrm{max}(n)|,  |j_\mathrm{max}(n)| \leq m$, and $k,l \in \ZZ^2$ satisfying $-K+m \leq k,l \leq K-m$, one has $|k+i|,|l+j| \leq K$ for all $(i,j) \in R_n$ and so
	\[ \sup_{n \, : \, |i_\mathrm{max}(n)|,  |j_\mathrm{max}(n)| \leq m} A_n|\varphi_x|(k,l) = \sup_{n \, : \, |i_\mathrm{max}(n)|,  |j_\mathrm{max}(n)| \leq m} M_n|f|(S^kT^lx). \]
	We note $\widetilde{A}^m := \sup_{n \, : \, |i_\mathrm{max}(n)|,  |j_\mathrm{max}(n)| \leq m} A_n$ and $\widetilde{M}^m := \sup_{n \, : \, |i_\mathrm{max}(n)|,  |j_\mathrm{max}(n)| \leq m} M_n$. Obverse that the truncated operator $\widetilde{A}^m$ is still of weak-type $(p,p)$. Let $\lambda > 0$. Define the set
	\[ S_\lambda := \{ (x,(k,l)) \in X \times \ZZ^2 : -K+m \leq k,l \leq K-m, \widetilde{M}^mf(S^kT^lx)> \lambda \}, \]
	and denote by $\mathrm{Count}$ the counting measure on $\ZZ^2$. By Fubini's theorem, we have:
	\[ \int_X \int_{\ZZ^2} \chi_{S_\lambda} \d \mathrm{Count} \d \mu = \int_{\ZZ^2} \int_X \chi_{S_\lambda} \d \mu \d \mathrm{Count}.\]
	The idea is then to estimate separately these two quantities.
	\begin{enumerate}
		\item On the one hand, \begin{align*}
			\int_X \int_{\ZZ^2} \chi_{S_\lambda} \d \mathrm{Count} \d \mu &= \int_X \# \{ (k,l) \in \ZZ^2 : -K+m \leq k,l \leq K-m, \widetilde{M}^mf(S^kT^lx) >  \lambda \} \d \mu(x) \\
			&= \int_X \# \{ (k,l) \in \ZZ^2 : -K+m \leq k,l \leq K-m, A^m\varphi(k,l) >  \lambda \} \d \mu(x). \\
			&\leq \int_X \# \{ (k,l) \in \ZZ^2 : A^m\varphi(k,l) >  \lambda \} \d \mu(x)
		\end{align*}
		We now use the fact that $\widetilde{A}^m$ is of weak-type $(p,p)$, this yields
		\begin{align*} 
			\int_X \int_{\ZZ^2} \chi_{S_\lambda} \d \mathrm{Count} \d \mu \leq \int_X  C \left (\frac{\| \varphi_x \|_{l^p(\ZZ^2)}}{\lambda}\right )^p \d \mu(x) = \frac{C}{\lambda^p} \int_X \sum_{-K \leq k,l \leq K} |f(S^kT^lx)|^p \d \mu(x).
		\end{align*}
		Since $S$ and $T$ are measure preserving, we finally have
		\begin{align}\label{EQ - transfert lemma Fubini 1}
			\int_X \int_{\ZZ^2} \chi_{S_\lambda} \d \mathrm{Count} \d \mu \leq \frac{C}{\lambda^p} (2K+1)^2 \| f \|^p_{L^p(X)}.
		\end{align}
		\item On the other hand, since $S$ and $T$ are invertible and measure preserving
		\begin{align*}
			\int_{\ZZ^2} \int_X \chi_{S_\lambda} \d \mu \d \mathrm{Count} &= \sum_{-K+m \leq k,l \leq K-m} \mu \left ( \left \{ x \in X : \widetilde{M}^mf(S^kT^lx) > \lambda \right \} \right ) \\
			&= \sum_{-K+m \leq k,l \leq K-m} \mu \left ( S^{-k}T^{-l}\left \{ y \in X : \widetilde{M}^mf(y) > \lambda \right \} \right ) \\
			&= \sum_{-K+m \leq k,l \leq K-m} \mu \left ( \left \{ y \in X : \widetilde{M}^mf(y) > \lambda \right \} \right ),
		\end{align*}
		and then
		\begin{align}\label{EQ - transfert lemma Fubini 2}
			\int_{\ZZ^2} \int_X \chi_{S_\lambda} \d \mu \d \mathrm{Count} = (2K-2M+1)^2 \mu \left ( \left \{ y \in X : \widetilde{M}^mf(y) > \lambda \right \} \right ).
		\end{align}
	\end{enumerate}
	Combining (\ref{EQ - transfert lemma Fubini 1}) and (\ref{EQ - transfert lemma Fubini 2}) we obtain
	\[ \mu \left ( \left \{ y \in X : \widetilde{M}^mf(y) > \lambda \right \} \right ) \leq \frac{C}{\lambda^p} \left (\frac{2K+1}{2K-2m+1} \right )^2 \| f \|^p_{L^p(X)}.\]
	Letting $K \to \infty$ in the previous inequality, we conclude that
	\[ \mu \left ( \left \{ y \in X : \widetilde{M}^mf(y) > \lambda \right \} \right ) \leq C \left( \frac{\| f \|_{L^p(X)}}{\lambda} \right)^p .\]
	Since the sets $\left ( \left \{ x \in X : \widetilde{M}^mf(x)>\lambda \right \} \right )_{m \in \NN}$ are increasing to $\{x \in X : M^*f(x) > \lambda\}$, we get that
	\[ \mu \left ( \left \{ y \in X : M^*f(y) > \lambda \right \} \right ) \leq C \left( \frac{\| f \|_{L^p(X)}}{\lambda} \right)^p, \]
	which prove the converse for $1 \leq p < \infty$.
\end{proof}

The proof of Lemma \ref{LEM - Transfert lemma Tauberian condition} is a direct consequence of the above.

\begin{proof}[Proof of Lemma \ref{LEM - Transfert lemma Tauberian condition}]
	We can follow exactly the same proof as above, we just need to verify that if $\varphi$ is an indicator function, then $f_N := \varphi \circ \gamma_N$ is also an indicator function. So let $E \subset \ZZ^2$ such that $\# E < \infty$. Then, for any $N \in \NN$ one has for all $x \in X$
	\[ f_N(x) = \chi_E(\gamma_N(x)) = \chi_{\widetilde{E}}(x),\]
	where $\widetilde{E} = \{ x \in S^kT^l \Omega_N : (k,l) \in E \cap \{-N, \ldots , N-1\}^2\}$. We do indeed obtain an indicator function. Let us focus on the converse. Let $A \in \AAA$ be a measurable set such that $0 < \mu (A) < \infty$. We fix $x \in X$, and we define $\varphi_x$ associated to $f = \chi_A$ as above (see (\ref{EQ - transfer lemma - déf de phi x à partir de f})). For $(k,l) \in \ZZ^2$, we have
	\[ \varphi_x(k,l) = \begin{cases}
		\chi_A (S^kT^lx) \text{ if $|k|,|l| \leq K$,} \\
		0 \text{ otherwise.}
	\end{cases}
	\]
	Then $\varphi$ is the indicator function of the set $\{ (k,l) \in \{-K, \ldots , K\}^2 : S^kT^lx \in A \}$.
\end{proof}

\section*{Acknowledgment}
We would like to thank the anonymous referee for their careful reading and helpful comments, which helped in improving the present paper. We also thank Laurent Moonens for his time, his valuable advice, and for introducing us to this subject.

\bibliographystyle{plain}
\bibliography{bibliographie}
%

\end{document}